\newtheorem{thm}{Theorem}
\newtheorem{lemma}{Lemma}[thm]
\newtheorem{proposition}[thm]{Proposition}
\newtheorem{corollary}[thm]{Corollary}
\theoremstyle{definition}
\newtheorem{example}[thm]{Example}
\newcommand{\mS}{\ensuremath{\mathcal{S}}}
\newcommand{\bz}{\ensuremath{\mathbf{z}}}
\newcommand{\bx}{\ensuremath{\mathbf{x}}}
\newcommand{\bo}{\ensuremath{\mathbf{1}}}
\newcommand{\balpha}{{\mbox{\ensuremath{\boldsymbol{\alpha}}}}}
\newcommand{\biota}{{\mbox{\ensuremath{\boldsymbol{\iota}}}}}
\newcommand{\btheta}{{\mbox{\ensuremath{\boldsymbol{\theta}}}}}
\newcommand{\brho}{{\mbox{\ensuremath{\boldsymbol{\rho}}}}}
\newcommand{\btau}{{\mbox{\ensuremath{\boldsymbol{\tau}}}}}
\newcommand{\bepsilon}{{\mbox{\ensuremath{\boldsymbol{\epsilon}}}}}
\newcommand{\bomega}{{\mbox{\ensuremath{\boldsymbol{\omega}}}}}
\newcommand{\Diag}{\ensuremath{\operatorname{Diag}}}
\title{The asymptotics of reflectable weighted walks in arbitrary dimension}
\author{Marni Mishna and Samuel Simon\\Department of Mathematics Simon Fraser University, Burnaby BC Canada}
\begin{document}

\maketitle

\abstract{Gessel and Zeilberger generalized the reflection principle to handle walks confined to Weyl chambers, under some restrictions on the allowable steps. For some models that are invariant under the Weyl group action, they express the counting function for the walks with fixed starting and endpoint as a constant term in the Taylor series expansion of a rational function. Here, we focus on the simplest case, the Weyl groups $A_1^d$, which correspond to walks in the first orthant $\mathbb{N}^d$ taking steps from a subset of $\{\pm1, 0\}^d$ which is invariant under reflection across any axis. The principle novelty here is the incorporation of weights on the steps and the main result is a very general theorem giving asymptotic enumeration formulas for walks that end anywhere in the orthant.  The formulas are determined by singularity analysis of multivariable rational functions, an approach that has already been successfully applied in numerous related cases. 
}




\section{Introduction}
The simplicity of lattice walk models in part explains their utility for modelling a variety of objects. In particular, families of lattice walks which incorporate many types of symmetry are a natural way to interpret quantities in representation theory.  Asymptotic enumeration formulas help us understand the interaction between combinatorial properties and the large scale behaviour of the walks. In this work, we consider walks restricted to the positive orthant whose possible steps are chosen from a set with symmetry across every axis. These are the walks in the fundamental chamber associated to the Weyl group $A_1^d$. By considering weighted walks, we are able to examine the impact of a continuous deformation of the drift on the enumeration.  Our main result, Theorem \ref{thm:MAIN}, is a very general asymptotic enumeration formula which can be applied to enumerate weighted walks with reflectable step sets contained in~$\{\pm1, 0\}^d$. The formula is parameterized by the dimension and the weights, which clearly illustrates their impact on the exponential and sub-exponential growth. 

\subsection{Reflectable walks}
To start, we describe the families of walks considered. A lattice model in dimension $d$ is defined by its stepset, denoted $\mS$. Here we restrict to $\mS\subseteq\{0,1,-1\}^d$. Let~$\mathcal{Q}$ be the set of lattice walks starting at the origin, taking steps from $\mS$ which remain in the positive orthant $(\mathbb{Z}_{\geq 0}^d)$. That is, a walk of length $n$ in the class is a sequence $(w_1, \dots, w_n)$ of steps $w_j \in S$, viewed as incremental moves starting from the origin. After each move the walk must remain in the positive orthant: $(\sum_{i=1..k} w_i)\in\mathbb{Z}_{\geq 0}^d$ for $k=1, \dots, n$. Here we focus on stepsets which are \emph{reflectable}, that is, the stepset is invariant under reflection across any axis. Furthermore, we require the stepset to be genuinely $d$-dimensional, in the sense that for any dimension there is at least one step that moves in that dimension. 

Reflectable lattice models appear in the literature in the study of walks in Weyl chambers. Zeilberger~\cite{Zeil83} determined an expression for the generating function of $d$ dimensional highly symmetric models, and then Gessel and Zeilberger~\cite{GeZe92} illustrated how to generalize the reflection principle argument to handle walks in Weyl chambers. As we do not otherwise comment on the representation theoretic aspects, we point the reader to these sources for details on the connections. The proof of their  main formulas rely on two key properties of the model: symmetry of the stepset with respect to the underlying reflection group, and the impossibility of a step jumping over a boundary. We satisfy this latter criterion here by restricting to unit steps on the integer lattice. 

The formulas of Gessel and Zeilberger are well suited to asymptotic analysis. Grabiner~\cite{Grab02} used them to describe the exponential generating function using determinants of matrices of Bessel functions. The reflection argument in Weyl chambers involves a signed sum over permutations of terms, which leads to the presence of determinants. The recent work of Feierl develops the related asymptotic formulas for type $A$ walks~\cite{Feie18+} using a theorem of H\"ormander to estimate a Fourier-Laplace integral, numbered in this paper as Theorem
\ref{integral}.
The results of Melczer and Mishna~\cite{MeMi16} and Melczer and Wilson~\cite{MeWi18+}, and their formulas are made similarly explicit by using the formalism articulated in the book by Pemantle and Wilson~\cite{PeWi13} to apply some of these integral formulas in a more systematic setting. As a result, their results are a little more general.  
These papers consider, respectively, unweighted reflectable walks in arbitrary dimension (called highly symmetric), and a slight relaxation for stepsets lacking symmetry in precisely one dimension. 

More generally, there are several approaches for asymptotic lattice path enumeration. Recent results in probability theory have been applied to  determine asymptotic formulas for excursions for models with zero drift~\cite{DeWa15}, and general walks with negative drift~\cite{Dura14}. These methods cannot be used to determine the leading constant, or to determine terms beyond the dominant growth. In two and three dimensions there are several approaches for asymptotic enumeration of lattice models which pass through differential equations, see~\cite{BoChHoKaPe17} and the references therein. Differential equation approaches become computationally infeasible in higher dimensions, and present theory does not permit treatment of dimension as a symbolic parameter. 

\subsection{Weighted walks}
A central weighting is characterized by the property that two walks of the same length that end at the same point must have the same weight.
Kauers and Yatchak~\cite{KaYa15} determined, in the 2D case, precisely which weighted models would have finite orbit sums, a property important for a popular enumeration strategy (the kernel method) from which one can deduce properties such as D-finiteness. The models they determined are mostly examples of central weightings.

Courtiel et al.~\cite{CoMeMiRa17} showed that the (univariate) ordinary generating function for weighted walks with a central weighting could be obtained as an evaluation of the (multivariate) generating function for unweighted walks considering endpoints. Consequently, we will phrase our results in terms of evaluations of the generating function for unweighted walks. We could interpret this as weighting directions, rather than steps.

\subsection{Main result and organization of the paper}
To lighten the presentation of our results we use the following notation. We denote vectors by boldface: $\bx:= (x_1, \dots, x_d)$ and we extend operations component-wise when it makes sense: 
$\bx\balpha:= (x_1\alpha_1, \dots, x_d\alpha_d)$; 
$\bx^\balpha:= (x_1^{\alpha_1}, \dots, x_d^{\alpha_d})$; 
$\bx^{-1}:= (x_1^{-1}, \dots, x_d^{-1})$, and $e^\btheta:= (e^{\theta_1}, \dots, e^{\theta_d})$.

Suppose that $\mathcal{Q}$ is a class of lattice walks. We define the \emph{complete generating function} associated to the model as the formal power series:
\begin{equation}
 Q(\bx;t):= \sum_{\iota \in  \mathbb{N}^d, n \ge 0} q(\biota; n) \bx^\biota t^n
\end{equation}
where $q(\biota; n)$ is the number of (unweighted) walks of length $n$ that start at the origin end at the point $\biota$. 

\begin{proposition}\label{thm:evaluation}
Let $\mathcal{S}$ be any stepset and let $Q(\mathbf{x};t)$ be its associated complete generating function. For any centrally weighted model, there exits a weight-vector $\balpha$ of positive real numbers, and a positive real constant $\beta$ such that the quantity $q_\alpha(n)$ defined as the weighted sum of all walks of length $n$ is equal to
\[q_\alpha(n)=[t^n]Q(\balpha;\beta t).\]
\end{proposition}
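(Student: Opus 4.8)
The plan is to show that centrality forces the step weights into a multiplicative form $w(\bs)=\beta\,\balpha^{\bs}$ on the stepset, after which the identity is immediate from multiplicativity of the walk weight. Write $w:\mathcal S\to\mathbb R_{>0}$ for the per-step weights and extend $w$ to walks multiplicatively, so that $q_\alpha(n)=\sum_{|\gamma|=n}w(\gamma)$. If $\log w$ turns out to be the restriction to $\mathcal S$ of an affine function $\bs\mapsto b+\langle\mathbf a,\bs\rangle$ on $\mathbb R^d$, then a walk of length $n$ ending at $\biota$ has weight $\prod_\ell e^{b+\langle\mathbf a,\bs^{(\ell)}\rangle}=\beta^n\balpha^{\biota}$ with $\beta=e^b>0$ and $\balpha=(e^{a_1},\dots,e^{a_d})$ a vector of positive reals (which also re-proves centrality), and then $q_\alpha(n)=\sum_{\biota}q(\biota;n)\beta^n\balpha^{\biota}=[t^n]\sum_{\biota,m}q(\biota;m)\balpha^{\biota}(\beta t)^m=[t^n]Q(\balpha;\beta t)$. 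So everything reduces to producing that affine function.

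First I would record a reachability fact: for every $M$ there is a walk from the origin staying in $\mathbb N^d$ whose endpoint has every coordinate $\ge M$. Because the model is genuinely $d$-dimensional, for each coordinate $i$ there is a step moving in dimension $i$; reflecting its negative entries (legitimate for reflectable $\mathcal S$, and in any case only the achievable sub-orthant matters) gives a step $\bs^{(i)}\in\mathbb N^d$ with $\bs^{(i)}_i=1$, and concatenating $M$ copies of each $\bs^{(i)}$ does the job. Next I would translate centrality into relations among the $w(\bs)$: if $\{\bs_1,\dots,\bs_k\}$ and $\{\bs'_1,\dots,\bs'_k\}$ are step-multisets of the same size $k$ with $\sum_j\bs_j=\sum_j\bs'_j$, fix a walk $P$ from the origin to a point with all coordinates $\ge k$; appending $\bs_1,\dots,\bs_k$ in order keeps every partial sum in $\mathbb N^d$ (each step moves a coordinate by at most $1$ and there are at most $k$ of them), and likewise for the primed multiset. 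The two walks have equal length and equal endpoint, so centrality and $w(P)>0$ give $\prod_j w(\bs_j)=\prod_j w(\bs'_j)$, i.e. $\sum_\bs c_\bs\log w(\bs)=0$ for every integer vector $(c_\bs)$ with $\sum_\bs c_\bs\bs=\bzer$ and $\sum_\bs c_\bs=0$ (split $c$ into its positive and negative parts to get the two multisets).

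The last step is pure linear algebra. Set $u_\bs:=(\bs,1)\in\mathbb Z^{d+1}$; the relations above say precisely that the vector $L:=(\log w(\bs))_{\bs\in\mathcal S}$ is orthogonal to every integer element of the kernel of the matrix $A$ with columns $u_\bs$. Since that kernel is a rational subspace, its integer points span it over $\mathbb R$, so $L\perp\ker A$, hence $L\in\operatorname{im}A^{\!\top}$: there is $y=(a_1,\dots,a_d,b)\in\mathbb R^{d+1}$ with $\log w(\bs)=u_\bs\cdot y=\langle\mathbf a,\bs\rangle+b$ for all $\bs\in\mathcal S$, which is the affine function required above. (Degenerate stepsets, e.g. where no walk of positive length exists, make both sides trivially agree for any $\balpha,\beta$.)

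The main obstacle is the middle step, and it is conceptual rather than computational: one must be confident that centrality really delivers \emph{all} the multiplicative relations $\prod_j w(\bs_j)=\prod_j w(\bs'_j)$, which is exactly what the reachability of arbitrarily deep interior points buys us — it lets every equal-size, equal-displacement pair of step-multisets be realized as honest walks from the origin — and then one must recognize that ``$\log w$ annihilates every affine relation on $\mathcal S$'' is precisely the hypothesis needed to extend $\log w$ affinely, which hinges on the (routine but not purely mechanical) passage between integer and real syzygies.
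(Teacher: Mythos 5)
The paper does not prove Proposition~\ref{thm:evaluation}; it is stated and attributed to Courtiel et al.~\cite{CoMeMiRa17}, so there is no in-paper argument to compare against. Your self-contained proof is correct under the paper's standing hypotheses (unit steps, reflectable, genuinely $d$-dimensional). The chain of reasoning is sound: reduce centrality to the multiplicative relations $\prod_j w(\sigma_j)=\prod_j w(\sigma'_j)$ between equal-size, equal-displacement step-multisets by first walking deep into the interior so that both appended suffixes stay in the orthant; observe that these relations say exactly that $L=(\log w(\sigma))_{\sigma}$ is orthogonal to every \emph{integer} vector in the kernel of the matrix $A$ whose columns are $(\sigma,1)$; use that the kernel of an integer matrix is spanned over $\mathbb{R}$ by its integer points to upgrade this to $L\perp\ker A$, hence $L\in\operatorname{im}A^{\top}$, i.e.\ $\log w(\sigma)=\langle\mathbf{a},\sigma\rangle+b$; and then $q_\alpha(n)=[t^n]Q(\balpha;\beta t)$ is a one-line computation once $w(\sigma)=\beta\,\balpha^{\sigma}$ with $\beta=e^{b}$, $\balpha=e^{\mathbf{a}}$.

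One caveat worth stating plainly: the proposition reads ``any stepset,'' but your reachability lemma genuinely uses reflectability (to replace a step by its all-nonnegative reflection) together with genuine $d$-dimensionality. For a truly arbitrary stepset this step can fail --- e.g.\ $\mathcal{S}=\{(1,0),(-1,0),(0,-1)\}$, where no orthant walk ever leaves the $x$-axis --- and then the linear-algebra step only constrains $\log w$ on the steps that actually occur in some orthant walk; your parenthetical ``only the achievable sub-orthant matters'' gestures at this but is not worked out, and filling it in is not entirely automatic (one must identify the usable sub-stepset and show it still generates enough relations). Since the rest of the paper carries the reflectable, genuinely $d$-dimensional hypothesis, this is a wording mismatch with the proposition rather than a defect in your argument, but be aware that what you have proved is the statement for the class of models the paper actually treats, not literally for ``any stepset.''
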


Consequently, we define a weighted walk directly using the weight vector $\balpha$. Furthermore, we assume $\beta=1$. When $\beta\neq 1$, it suffices to rescale our enumeration results by multiplying the formula by $\beta^n$. 

  Let  $\balpha=(\alpha_1, \dots, \alpha_d)$ be a vector of positive real numbers. The weight of a walk weighed by $\balpha$ ending at $\biota\in \mathbb{Z}_{\geq 0}^d$ is the value $\prod\alpha_i^{\iota_i}$. Remark that this is equivalent to weighting a step $\sigma$ in $\mathcal{S}$ by 
  $\prod_{i=1..d} \alpha_i^{\sigma_i}$ and taking the weight of a walk to be the product of the weights of the steps. 

Our main result is the following enumeration formula for weighted reflectable walks in arbitrary dimension.

\begin{thm}
\label{thm:MAIN}
Fix the dimension $d\geq 1$. Let $\mathcal{S}\subset \{-1,0,1\}^d$ be a nontrivial reflectable stepset defining a lattice model of walks such that each walk starts at the origin and remains in the first orthant $\mathbb{Z}_{\geq 0}^d$.  Let $\balpha=(\alpha_1, \dots, \alpha_d)$ be a vector of positive weights, and let $q_\alpha(n):=[t^n]Q(\balpha;t)$ be the weighted sum of all walks of length $n$ as defined above. Asymptotically, as $n$ tends to infinity,  
\begin{align*}
q_\alpha(n)\sim \gamma \cdot S(\balpha^+)^n \cdot n^{-(r/2)-m}, 
\end{align*}
where $S(x)=\sum_{\boldsymbol{\sigma} \in \mathcal{S}}\mathbf{x}^{\boldsymbol{\sigma}}$, is the stepset inventory Laurent polynomial;  $\alpha^+_i = \max\{\alpha_i, 1\}$ for all $i$;
$m$ is the number of $\alpha_i$ strictly less than 1 and $r$ is the number of $\alpha_i$ less than or equal to 1, and  $\gamma$ is a known computable constant. 
\end{thm}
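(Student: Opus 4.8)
The plan is to run the reflection-principle-to-Cauchy-integral pipeline and then extract the asymptotics by a coordinatewise local analysis. Since the underlying reflection group $A_1^d$ is $(\mathbb{Z}/2\mathbb{Z})^d$, acting by independent sign flips in each coordinate, the Gessel--Zeilberger formula writes the number $q(\biota;n)$ of constrained walks ending at $\biota$ as a signed sum, over $\bepsilon\in\{\pm1\}^d$, of counts of unconstrained walks from $\bo$ to $\bepsilon\cdot(\biota+\bo)$ — the translation by $\bo$ accounting for the fact that the origin sits on every wall, so that after the shift the walls are the coordinate hyperplanes and a unit step cannot hop across. Each unconstrained count is a monomial coefficient of $S(\bx)^n$. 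Summing $\balpha^\biota q(\biota;n)$ over $\biota\in\mathbb{N}^d$, which by Proposition~\ref{thm:evaluation} (with $\beta=1$) equals $q_\alpha(n)$, and performing the geometric sums, I expect an identity of the form
\[
q_\alpha(n)=\frac{1}{(2\pi i)^d}\oint \frac{S(\bx)^n\,\prod_{i=1}^d(x_i^2-1)}{\prod_{i=1}^d(x_i-\alpha_i)(\alpha_i x_i-1)}\,\frac{d\bx}{\bx},
\]
the contour being a torus $|x_i|=\rho_i$ on which every geometric sum converges. The rational kernel tensorizes over the coordinates even though $S(\bx)^n$ does not, and the numerator factors $x_i^2-1$ (vanishing at $x_i=\pm1$) are the algebraic shadow of the reflection cancellation that produces the sub-exponential orders.

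Next I would pin down the exponential rate. The Laurent polynomial $S$ has nonnegative coefficients, so $|S(\bx)|\le S(\brho)$ on the torus, with equality at the positive real point; reflectability forces $S$, restricted to the positive orthant, to be convex in each $\log\rho_i$ with minimum at $\rho_i=1$. The admissible radii are constrained by $\rho_i>\alpha_i$ (convergence of the $i$-th geometric sum), so the admissible minimizer of $S(\brho)$ has $\rho_i=1$ when $\alpha_i\le1$ and $\rho_i\searrow\alpha_i$ when $\alpha_i>1$; that is, $\brho=\balpha^+$, giving exponential rate $S(\balpha^+)^n$. I then localize the integral to a neighbourhood of the real critical point $\btheta=\bzer$, the remainder of the torus contributing at exponentially smaller rate. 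Here one must rule out competing critical points of equal modulus; these genuinely appear for periodic step sets (the simple nearest-neighbour walk has a second critical point, of opposite sign, at $\btheta=\pi\bo$), and in such cases the several dominant contributions have to be added — this is where the nontriviality hypothesis, or an explicit enumeration of the dominant critical points, enters.

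The heart of the argument is the local analysis near $\btheta=\bzer$, which splits by coordinate into three regimes. For $\alpha_i>1$ that variable is removed first by a residue at $x_i=\alpha_i$, contributing a positive constant factor (of the shape $1-\alpha_i^{-2}$) and $n^{0}$. For $\alpha_i=1$ the kernel has a simple pole in $\theta_i$ at the origin, since the denominator supplies $(x_i-1)^2$ against the numerator's $(x_i-1)$; paired with the Gaussian decay of $S(\bx)^n$ in the $\theta_i$ direction this yields a factor $n^{-1/2}$. For $\alpha_i<1$ the kernel instead has a simple zero in $\theta_i$, so the leading term of the $\theta_i$-integral is odd and integrates to zero, and the first surviving contribution is of order $n^{-3/2}$. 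Multiplying over coordinates, the $\alpha_i=1$ directions each give $n^{-1/2}$ and the $\alpha_i<1$ directions each give $n^{-3/2}$, for a total exponent $-\tfrac{1}{2}\,\#\{i:\alpha_i=1\}-\tfrac{3}{2}\,\#\{i:\alpha_i<1\}=-r/2-m$, and evaluating the surviving Gaussian integrals through the Fourier--Laplace estimate of Theorem~\ref{integral} yields the explicit constant $\gamma$.

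The main obstacle is exactly this local analysis performed uniformly: keeping the three coordinate regimes, and the mixed second derivatives of $S$ that couple the coordinates, under simultaneous control inside one application of Theorem~\ref{integral}; and correctly handling the two delicate features — the kernel pole sitting precisely on the contour when $\alpha_i=1$, and the cancellation of the naive leading term forced by the kernel's zero when $\alpha_i<1$, which is what upgrades an expected $n^{-1/2}$ to $n^{-3/2}$ in those directions. A secondary technical point is making the passage from the torus to the $\balpha^+$-contour rigorous when several $\alpha_i$ exceed $1$, i.e. iterating the residue step while controlling the leftover lower-dimensional contour integrals and confirming they stay of strictly smaller exponential order.
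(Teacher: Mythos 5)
Your overall architecture mirrors the paper's: reduce to a Cauchy integral over a torus, identify $\balpha^+$ as the dominant critical point, residue off the $\alpha_i>1$ coordinates, and feed the remaining $r$-dimensional integral into the Fourier--Laplace estimate of Theorem~\ref{integral}. You reach the integral by applying the Gessel--Zeilberger signed sum and performing the geometric sums directly, whereas the paper cites the diagonal representation of Melczer--Mishna/Chyzak et al.\ (their Equation~\eqref{eq:diag}) and takes a residue in $t$. The two kernels that result are different rational functions (the paper's has $y_k^2(y_k-\alpha_k)$ where yours has $(z_k-\alpha_k)(1-\alpha_k z_k)$), but they do represent the same coefficient extraction once contours are chosen consistently; your derivation is arguably more self-contained, and either gets one to the same local analysis.

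Two substantive points, though. First, your geometric resummation requires the contour to satisfy $\alpha_i<|z_i|<1/\alpha_i$ for every $i$, which is an empty constraint whenever $\alpha_i\ge 1$; so your integral identity as stated is not valid in the very regime where residues at $z_i=\alpha_i$ are later needed. One must either sum the $\epsilon_i=\pm1$ contributions over separate contours and track the residues picked up while bringing them together, or (as the paper does) start from a representation whose only denominator pole in $x_k$ sits at $x_k=1$ and is thus on the convergence boundary for every $\alpha_i$. You allude to this as ``a secondary technical point,'' but it is in fact a hypothesis failure in your opening identity that must be repaired before anything downstream makes sense. (Also, the $\frac{d\bx}{\bx}$ in your display should just be $d\bx$; the $x_k^{-1}$ is already absorbed into the residue form of the coefficient extraction.) Second, you flag ``the mixed second derivatives of $S$ that couple the coordinates'' as the main obstacle to making the coordinatewise count rigorous inside one application of Theorem~\ref{integral} --- but reflectability kills exactly this worry: since $S(\balpha\bx^{-1})$ has the form $(\alpha_k/x_k+x_k/\alpha_k)P_k+Q_k$ with $P_k,Q_k$ free of $x_k$, all mixed second partials of $\phi$ at $\btheta=\bzer$ vanish, the Hessian is diagonal, and hence the operator $\mathcal{D}$ in Theorem~\ref{integral} is a pure sum $\sum_k h_{kk}^{-1}\partial_{\theta_k}^2$ with no cross terms. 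Combined with the fact that the amplitude $A(\btheta)$ factors as a product over coordinates, this is precisely what makes Lemma~\ref{first} go through: $A_k(0)=0$ for $k\le m$ forces $\mathcal{D}^j A(\bzer)=0$ for $j<m$, and odd-order vanishing of $\underline{\phi}$ kills all $\ell\ge1$ terms in $C_m$. Your heuristic ``the leading term is odd and integrates to zero'' is the right intuition, but without noting the diagonality of the Hessian (and hence the absence of cross-derivatives in $\mathcal{D}$) you cannot rule out contamination from the other coordinates when expanding the $C_j$; that structural observation is the hinge of the paper's argument and is what your sketch is missing.
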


The proof of Theorem~\ref{thm:MAIN} is given in the next section, and uses a description of the generating function as a residue which is then converted into a integral of type Fourier-Laplace. The computation first treats those components of the weight vector greater than one, and then the weights less than or equal to one.  
As per usual in analytic combinatorics, the growth is determined by locating singular points near the boundary of convergence, and identifying the contribution of each to the asymptotics. Those points which affect the dominant term in the asymptotics are called the contributing critical points. In this case the characterization is complete, and the leading constant depends on this set of points in a computable way. 

\begin{example}[The simple walks]
Consider the three dimensional simple walks, where the step set is the set of elementary vectors, and their negatives:
\begin{align*}
\mathcal{S}=\{\pm(1,0,0), \pm(0,1,0), \pm(0,0,1)\} \\
S(x,y,z)= x+1/x+y+1/y+z+1/z.
\end{align*}
The following integer weighting of the steps is central:\\

\centerline{\begin{tabular}{lcccccc}
Step & $(1,0,0)$ & $(-1,0,0)$ & $(0,1,0)$& $(0,-1,0)$ & $(0,0,1)$& $(0,0,-1)$\\
Weight & 8 & 2 & 4 & 4 & 1 & 16 \\[1pt]
\end{tabular}}

\smallskip

\noindent The associated weight vector is: $\alpha = (2,1,1/4)$ with $\beta=4$, hence $r=2$, $m=1$ in Theorem~\ref{thm:MAIN}.  By
Theorem~\ref{thm:MAIN} the number of walks of length $n$ has exponential growth $\beta \cdot S(2,1,1)=26$ and subexponential growth $n^{-2/2 -1}= n^{-2}$. The four critical points are computed to be $(2,1,1), (2,1,-1), (2,-1,1), (2,-1,-1)$. 
 However, we know the last two are not contributing critical points via Proposition~\ref{contrib} as the stepset has weight $\alpha_2=1$.  Lastly, as $\vert S(2,1,-1) \vert < \vert S(2,1,1) \vert$, we know that $(2,1,1)$ is the only contributing critical point. 
The associated constant factor is $\frac{169}{3\pi}$,  following Equation~\eqref{ConstantCase}, computed as the product of $c(z_1)= \frac{3}{4}, c(z_2)=\frac{\sqrt{13}}{ \sqrt{2\pi}} , c(z_3)= \frac{ 13 \sqrt{13} }{4\sqrt{2\pi}} (\frac{4}{3})^2$.
\end{example}

\subsection{Comparison to earlier formulas}
The case where all weights are 1 was considered by Melczer and Mishna, and our formulas agree. 
The drift of a model is the vector sum of the stepset: $\delta_{\mathcal{S}}:= \sum_{\sigma\in\mathcal{S}} \sigma$. By the work of Duraj~\cite{Dura14}, for the walks considered here, when this vector is in the negative orthant $\mathbb{Z}_{<0}^d$, the exponential growth factor and the critical exponent should agree with those found for the excursions of the unweighted model. We show how to prove this property in the concluding remarks. The excursion enumeration formulas of Denisov and Wachtel~\cite{DeWa15} agree with ours for the known 2D and 3D cases~\cite{BoRaSa14, BoPeRaTr18+}. The 1D results of Banderier and Flajolet~\cite{BandFlaj} also agree with our formulas. 

One feature of our formulas is that you can visualize the regions where the formula changes. Figure~\ref{fig:diag} illustrates the main theorem on the simple two dimensional walks. In particular, we observe that the exponential growth is smooth across boundaries, whereas the subexponential growth makes jumps at the boundaries. We also note that the weighted 1-dimensional walk has subexponential growth 0 for positive drift, -1/2 for zero drift, and -3/2 for negative drift. By varying only one weight and observing the change in the asymptotic regime, we recover this 1-dimensional behavior. In this sense, we see that these $d$-dimensional walks behave as a product of $d$ 1-dimensional walks.

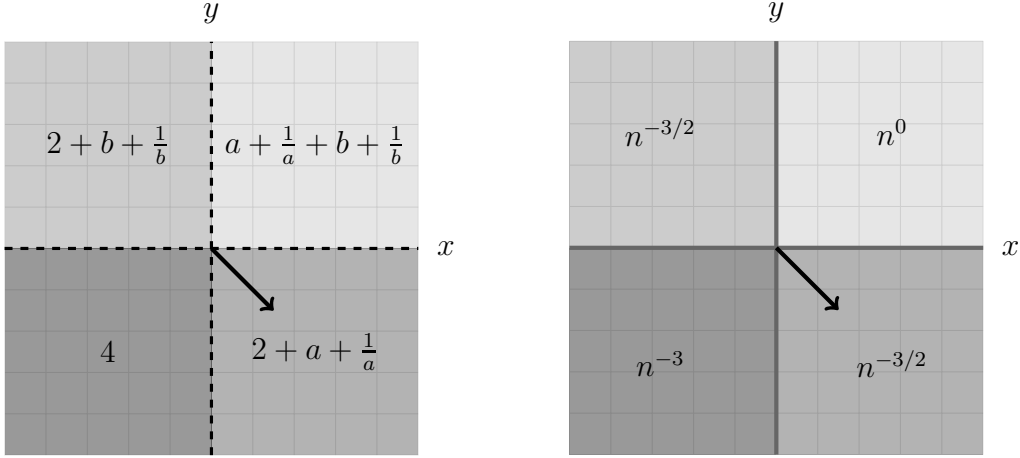
\begin{figure}\center
\mbox{}\hfill
\begin{tikzpicture}[scale=1.1, font=\large]
\draw[step=0.5,draw=gray!20, very thin] (-2.5,-2.5) grid (2.5,2.5);
\fill[opacity=0.1] (0,0) -- (2.5,0) -- (2.5,2.5) -- (0,2.5) -- (0,0);
\fill[opacity=0.2](0,0) -- (-2.5,0) -- (-2.5,2.5) -- (0,2.5) -- (0,0);
\fill[ opacity=0.3](0,0) -- (2.5,0) -- (2.5,-2.5) -- (0,-2.5) -- (0,0);
\fill[ opacity=0.4] (0,0) -- (-2.5,0) -- (-2.5,-2.5) -- (0,-2.5) -- (0,0);
\draw[very thick, dashed] (0,-2.5) -- (0,2.5);
\draw[very thick, dashed] (-2.5,0) -- (2.5,0);
\node[label=right:{$x$}] at (2.5,0) {}; 
\node[label=above:{$y$}] at (0,2.5) {}; 
\node[] at (1.25,1.25) {$a+\frac1a+b+\frac1b$}; 
\node[] at (1.25,-1.25) {$2+a+\frac1a$}; 
\node[] at (-1.25,1.25) {$2+b+\frac1b$}; 
\node[] at (-1.25,-1.25) {$4$}; 
\draw[->, ultra thick] (0,0) -- (3/4, -3/4);
\end{tikzpicture}\hfill
\begin{tikzpicture}[scale=1.1, font=\large]
\draw[step=0.5,draw=gray!20,very thin] (-2.5,-2.5) grid (2.5,2.5);
\fill[opacity=0.1] (0,0) -- (2.5,0) -- (2.5,2.5) -- (0,2.5) -- (0,0);
\fill[opacity=0.2](0,0) -- (-2.5,0) -- (-2.5,2.5) -- (0,2.5) -- (0,0);
\fill[ opacity=0.3](0,0) -- (2.5,0) -- (2.5,-2.5) -- (0,-2.5) -- (0,0);
\fill[ opacity=0.4] (0,0) -- (-2.5,0) -- (-2.5,-2.5) -- (0,-2.5) -- (0,0);
\draw[ultra thick, black!60] (0,-2.5) -- (0,2.5);
\draw[ultra thick, black!60] (-2.5,0) -- (2.5,0);

\draw[->, ultra thick] (0,0) -- (3/4, -3/4);
\node[label=right:{$x$}] at (2.5,0) {}; 
\node[label=above:{$y$}] at (0,2.5) {}; 
\node[] at (1.4,1.4) {$n^0$}; 
\node[] at (1.4,-1.4) {$n^{-3/2}$}; 
\node[] at (-1.4,1.4) {$n^{-3/2}$}; 
\node[] at (-1.4,-1.4) {$n^{-3}$}; 
\node[fill=black!50, fill opacity=0, text opacity=1] at (0,0.1) {$n^{-1}$}; 
\node[fill=black!50, fill opacity=0, text opacity=1] at (-1.2,0.1) {$n^{-2}$}; 
\node[fill=black!50, fill opacity=0, text opacity=1] at (0,-1.2) {$n^{-2}$}; 
\node[fill=black!50, fill opacity=0, text opacity=1] at (1.4,0.1) {$n^{-{\frac{1}{2}}}$}; 
\node[fill=black!50, fill opacity=0, text opacity=1] at (0,1.4) {$n^{-{\frac{1}{2}}}$}; 
\end{tikzpicture}
\hfill\mbox{}
\caption{A schematic of the asymptotic growth for simple two dimensional walks with weight vector~$(a,b)$. The weight vector defines a drift vector of the model, in this case $S= \{N,S,E,W\}$ with drift $(x,y)=(a-1/a, b-1/b)$. This location of this point on the two figures describes the asymptotic growth of the model (up to a constant). The left hand side gives the exponential growth $\mu$ and the right hand side gives the subexponential growth $n^\alpha$. For example, the weight vector $(2,1/2)$, has drift vector $(3/2,-3/2)$. This vector is indicated in both figures in black, and we read off the number of walks in this class of weighted walks grows like~$\gamma (\frac92)^nn^{-\frac{3}{2}}$ for large $n$ and for some constant $\gamma$.}
\label{fig:diag}
\end{figure}

\section{Proof of Theorem~\ref{thm:MAIN}}
Melczer and Mishna outline the strategy in their study of the unweighted case, and setup here is similar. However, it differs in that we use the two stage evaluation of the integral following the strategy of Courtiel et al. The main steps are as follows:
\begin{enumerate}\setlength\itemsep{-.25em}
 \item Write the generating function as a diagonal of a rational function;
 \item Determine the minimal critical points of the rational function; 
 \item Write the coefficient as an iterated Cauchy integral;
 \item Apply univariate residue theorem to reduce the dimensions of the integral, specifically with weights greater than 1;
 \item Rewrite the integral and apply known formulas for Fourier-Laplace integrals. 
\end{enumerate}
The final step requires a potentially intense computation. However, the form of the inventory polynomial permits an important deduction which reduces this computation, and allows us to say general things. 

\subsection{A diagonal expression}
Because of Proposition~\ref{thm:evaluation}, we can appeal directly to the diagonal expression for the generating function $Q(\mathbf{x};t)$ of Equation~(9) in Melczer and Mishna~\cite{MeMi16}.  The modification of this process required to give the weighted version which is simply an evaluation follows Chyzak et al.~\cite{BoChHoKaPe17}. Here, $\Diag$ is the diagonal operator: 
\[\Diag\left( \sum_{\mathbf{n}} f(n_1, n_2, \dots, n_d, n_{d+1}) x_1^{n_1}x_2^{n_2}\cdots x_d^{n_d}t^{n_{d+1}}\right) := \sum_{n\geq 0} f(n,n,\dots, n) t^n\]
which is known to be well defined as applied to these functions, as they are all products geometric series.
\begin{proposition} The generating function for weighted walks satisfies:
\begin{equation}\label{eq:diag}
\sum_{n\geq 0} q_\alpha(n)t^n = \Diag
\left(\frac{G(\bx)}{H(\bx;t)}\right)
=\Diag\left(
 \frac{\prod_{k=1}^d \alpha_k^{-2} (\alpha_k^2- x_k^2)}%
 {1 - t(x_1\cdots x_d)S(\balpha\cdot\bx^{-1})}\cdot \frac{1}{(1-x_1)\cdots(1-x_d)} \right).
\end{equation}\end{proposition}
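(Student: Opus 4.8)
The goal is to express $\sum_n q_\alpha(n) t^n$ as a diagonal of the explicit rational function displayed. The strategy is to first recall the unweighted diagonal representation of Melczer–Mishna for $Q(\mathbf{x};t)$, then substitute the weights via Proposition~\ref{thm:evaluation}, tracking how each factor transforms. First I would record that, by definition, $Q(\mathbf{x};t)$ is the complete generating function counting unweighted walks by length and endpoint, and that Melczer and Mishna (Equation~(9) in \cite{MeMi16}) prove
\[
Q(\mathbf{x};t) \;=\; \Diag\!\left(\frac{\prod_{k=1}^d (1 - x_k^2)}{1 - t\,(x_1\cdots x_d)\,S(\mathbf{x}^{-1})}\cdot \frac{1}{(1-x_1)\cdots(1-x_d)}\right),
\]
obtained from the reflection-principle (Gessel–Zeilberger) orbit sum over the group $A_1^d$: the numerator $\prod_k(1-x_k^2)$ is the alternating sum $\sum_{\epsilon\in\{\pm1\}^d}\prod_k(-1)^{[\epsilon_k=-1]} x_k^{\ldots}$ encoding the signed images of the starting point under the Weyl group, and the factor $(1-x_1)^{-1}\cdots(1-x_d)^{-1}$ collects endpoints over the whole orthant rather than at a fixed point. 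I would state this as the input and cite it, rather than reprove it.

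**The substitution.** The second step is to apply Proposition~\ref{thm:evaluation}: the weighted count $q_\alpha(n)$ equals $[t^n]\,Q(\balpha;\beta t)$, and under our normalization $\beta=1$ it is $[t^n]\,Q(\balpha; t)$. So I want $\sum_n q_\alpha(n) t^n = Q(\balpha;t)$. The subtlety is that one cannot simply plug $\mathbf{x}=\balpha$ into the diagonal expression, since the $\Diag$ operator acts on the formal variable $\mathbf{x}$; instead one must push the evaluation through. The clean way, following Chyzak et al.~\cite{BoChHoKaPe17}, is to note that evaluating $Q$ at $x_k \mapsto \alpha_k$ corresponds, inside the diagonal, to the rescaling $x_k \mapsto \alpha_k x_k$ of the summation variable together with extraction of the diagonal — i.e.,
\[
Q(\balpha;t) = \Diag\!\left(\frac{\prod_{k=1}^d (1 - \alpha_k^2 x_k^2)}{1 - t\,(\alpha_1 x_1\cdots \alpha_d x_d)\,S((\balpha\mathbf{x})^{-1})}\cdot \frac{1}{(1-\alpha_1 x_1)\cdots(1-\alpha_d x_d)}\right),
\]
because a monomial $x_1^{n_1}\cdots x_d^{n_d} t^m$ in the unweighted generating-function kernel contributes $\alpha_1^{n_1}\cdots\alpha_d^{n_d}$ to the weighted count exactly when $n_1=\dots=n_d$, which is precisely the diagonal with rescaled variables. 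Then I would multiply numerator and denominator of the $k$-th factor by $\alpha_k^{-2}$ to rewrite $1-\alpha_k^2 x_k^2 = \alpha_k^2(\alpha_k^{-2} - x_k^2)$ — wait, more carefully, $\alpha_k^{-2}(\alpha_k^2 - (\alpha_k x_k)^2)\cdot$ — I would reconcile the $x_k \mapsto \alpha_k x_k$ bookkeeping so that the factor reads $\alpha_k^{-2}(\alpha_k^2 - x_k^2)$ as claimed, absorbing the powers of $\alpha_k$ that appear in $(\alpha_1 x_1\cdots)S((\balpha\mathbf{x})^{-1})$ into the single combination $t(x_1\cdots x_d) S(\balpha\cdot\mathbf{x}^{-1})$ in the denominator, and the factor $(1-\alpha_k x_k)$ into $(1-x_k)$ after the change of variable. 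The precise matching of these $\alpha_k$ powers is the one genuinely fiddly point.

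**Well-definedness and the main obstacle.** Finally I would remark, as the paper already does, that $\Diag$ is well-defined here because $G(\mathbf{x})/H(\mathbf{x};t)$ expands as a product of geometric series with nonnegative-exponent support in a neighborhood of the origin, so the diagonal extraction is legitimate as a formal-power-series operation; the numerator $G$ is a polynomial and hence harmless. The main obstacle, I expect, is not conceptual but the careful reconciliation in the substitution step: one must verify that the Melczer–Mishna kernel is set up so that the rescaling $x_k\mapsto \alpha_k x_k$ interacts correctly with the argument $S(\mathbf{x}^{-1})$ (producing $S(\balpha\cdot\mathbf{x}^{-1})$, where the reflectability of $\mathcal S$ is what guarantees the numerator stays a clean product $\prod_k(\alpha_k^2 - x_k^2)$ up to the scalar $\prod_k \alpha_k^{-2}$), and that no stray powers of $\alpha_k$ or $t$ are lost. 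A secondary point worth a sentence is to confirm that $\beta$ really can be normalized to $1$ by the stated rescaling $q_\alpha(n)\mapsto \beta^n q_\alpha(n)$, which amounts to $t\mapsto \beta t$ and is transparent at the level of the diagonal.
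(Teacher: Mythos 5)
Your overall strategy matches the paper's exactly: the paper, too, simply cites Equation~(9) of Melczer--Mishna for the unweighted diagonal and then invokes Chyzak et al.\ for the passage to the weighted evaluation, without writing out the reconciliation. The issue is in the one step you try to make explicit, which you yourself flag as ``the one genuinely fiddly point,'' and the resolution you gesture at is incorrect.

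Concretely, the intermediate identity you propose,
\[
Q(\balpha;t) \;\overset{?}{=}\; \Diag\!\left(\frac{\prod_{k}(1-\alpha_k^2x_k^2)}{\bigl(1-t(\alpha_1x_1\cdots\alpha_dx_d)\,S\bigl((\balpha\bx)^{-1}\bigr)\bigr)\prod_k(1-\alpha_kx_k)}\right),
\]
is obtained by substituting $x_k\mapsto\alpha_kx_k$ everywhere in the unweighted kernel $F(\bx,t)$. But $\Diag F(\balpha\bx,t)=\sum_n q_{\bo}(n)\,(\alpha_1\cdots\alpha_d)^n\,t^n$, which is merely a $t$-rescaling of the \emph{unweighted} count, not $Q(\balpha;t)$. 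Already for $d=1$ with $\mS=\{\pm1\}$ and $n=2$ your right-hand side gives $2\alpha^2$, whereas the correct value is $1+\alpha^2$ (walks $++$ and $+-$). The rescaling cannot recover the endpoint weights because it acts uniformly on all factors, while the endpoint weight $\balpha^{\biota}$ only enters through the sum over endpoints. (A minor slip in the same passage: the left-hand side of the Melczer--Mishna identity should be $Q(\bo;t)$, not $Q(\bx;t)$; a $(d+1)$-variable series cannot equal a one-variable diagonal.)

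The correct reconciliation is: the endpoint-collection factor $\prod_k(1-x_k)^{-1}=\prod_k\sum_{\iota_k\ge0}x_k^{\iota_k}$ becomes $\prod_k(1-\alpha_kx_k)^{-1}=\prod_k\sum_{\iota_k\ge0}\alpha_k^{\iota_k}x_k^{\iota_k}$, while the orbit-sum numerator $\prod_k(1-x_k^2)$ and the kernel $1-t(x_1\cdots x_d)S(\bx^{-1})$ are left untouched. This yields a valid diagonal expression for $\sum_nq_\alpha(n)t^n$. To arrive at the symmetric form in the proposition, one then performs the \emph{diagonal-preserving} change of variables $x_k\mapsto x_k/\alpha_k$ together with the compensating $t\mapsto(\alpha_1\cdots\alpha_d)\,t$; the two rescalings cancel on the diagonal, and under this change $(1-x_k^2)\mapsto\alpha_k^{-2}(\alpha_k^2-x_k^2)$, $(1-\alpha_kx_k)\mapsto(1-x_k)$, and $t(x_1\cdots x_d)S(\bx^{-1})\mapsto t(x_1\cdots x_d)S(\balpha\bx^{-1})$, exactly as displayed. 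The point to absorb is that both substitutions are needed simultaneously: rescaling $\bx$ alone changes the value of the diagonal, and rescaling $\bx$ in the unweighted kernel without first replacing $\prod(1-x_k)^{-1}$ by $\prod(1-\alpha_kx_k)^{-1}$ never introduces the endpoint weights at all.
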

\noindent We identify $G(\bx)$ and $H(\bx; t)$ as the numerator
and denominator of Equation~\eqref{eq:diag} respectively.
\subsection{The critical points}\label{CritPts} The first step is to determine the possible singular points of $\frac{G(\bx)}{H(\bx;t)}$ which contribute to the asymptotic growth. We use the machinery developed in Pemantle and Wilson \cite{PeWi13} to find these points and find which points given the dominant asymptotics. In this case, it is sufficient to find these solutions $\brho^*$ to the following particular set of equations which maximize the value $|\rho_1\dots \rho_{d+1}|^{-1}$,
\begin{equation}\label{eq:critical}
 H(\bx;t)=0, \qquad x_1 \frac{\partial H(\bx;t)}{\partial x_1} = \dots = x_d\frac{\partial H(\bx;t)}{\partial x_d} = t\frac{\partial H(\bx;t)}{\partial t}.
\end{equation} 

\begin{lemma}
The solutions to \eqref{eq:critical} are $x_k = \pm \alpha_k, t=\frac{1}{x_1\dots x_d}{S(\balpha\bx^{-1})}$.
\end{lemma}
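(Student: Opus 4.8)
The plan is to substitute the explicit denominator of Equation~\eqref{eq:diag},
\[
 H(\bx;t)=\bigl(1-t\,P(\bx)\bigr)(1-x_1)\cdots(1-x_d),\qquad P(\bx):=x_1\cdots x_d\,S(\balpha\bx^{-1}),
\]
and push the system \eqref{eq:critical} through this factorization. At a smooth point of $\{H=0\}$ exactly one irreducible factor vanishes. It cannot be a factor $1-x_k$: on $\{x_k=1\}$ (with the remaining factors of $H$ nonzero) $t\,\partial_t H$ vanishes while $x_k\,\partial_{x_k}H$ does not, so \eqref{eq:critical} fails there. Hence $1-t\,P(\bx)=0$, which forces $t\,P(\bx)=1$; in particular $t\neq0$, $P(\bx)\neq0$, and $t=1/P(\bx)$.

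On $\{1-tP=0\}$ we have $t\,\partial_t(1-tP)=-tP=-1$ and $x_j\,\partial_{x_j}(1-tP)=-t\,x_j\,\partial_{x_j}P$, so \eqref{eq:critical} is equivalent to the $d$ equations $x_j\,\partial_{x_j}P(\bx)=P(\bx)$ for $j=1,\dots,d$ (together with $t=1/P$). Expanding the inventory gives $P(\bx)=\sum_{\sigma\in\mathcal{S}}\prod_{i=1}^{d}\alpha_i^{\sigma_i}x_i^{1-\sigma_i}$, hence
\[
 P(\bx)-x_j\,\partial_{x_j}P(\bx)=\sum_{\sigma\in\mathcal{S}}\sigma_j\prod_{i=1}^{d}\alpha_i^{\sigma_i}x_i^{1-\sigma_i}.
\]
This is where reflectability enters: the coordinate-$j$ sign flip $\sigma\mapsto\sigma^{(j)}$ preserves $\mathcal{S}$, fixes exactly the steps with $\sigma_j=0$ (which contribute $0$), and pairs each remaining step of $j$-th sign $+1$ with its partner of sign $-1$; the two terms of such a pair sum to $(\alpha_j-\alpha_j^{-1}x_j^2)\prod_{i\neq j}\alpha_i^{\sigma_i}x_i^{1-\sigma_i}$. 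Therefore
\[
 P(\bx)-x_j\,\partial_{x_j}P(\bx)=\bigl(\alpha_j-\alpha_j^{-1}x_j^2\bigr)T_j(\bx),\qquad T_j(\bx):=\sum_{\sigma\in\mathcal{S},\,\sigma_j=1}\ \prod_{i\neq j}\alpha_i^{\sigma_i}x_i^{1-\sigma_i},
\]
where $T_j$ does not involve $x_j$ and is a nonzero Laurent polynomial (its terms are distinct monomials) because the model is genuinely $d$-dimensional. Taking the branch $\alpha_j-\alpha_j^{-1}x_j^2=0$ for every $j$ yields $x_k=\pm\alpha_k$, and then $t=1/P(\bx)=S(\balpha\bx^{-1})/(x_1\cdots x_d)$; conversely each such point manifestly solves \eqref{eq:critical}.

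The step I expect to be the real obstacle is excluding the second branch, where $T_j(\bx)=0$ for some $j$ rather than $x_j=\pm\alpha_j$. When $\mathcal{S}$ contains no step whose $j$-th coordinate is $0$ this is automatic: the same pairing shows $P=(\alpha_j+\alpha_j^{-1}x_j^2)T_j$ in that case, so $T_j=0$ forces $P=0$ and the point is not on $\{H=0\}$. In general spurious solutions of the system can occur, and one disposes of them by checking that none attains the maximum of the quantity $|\rho_1\cdots\rho_{d+1}|^{-1}=|S(\balpha\bx^{-1})|$ subject to which \eqref{eq:critical} is solved, so that they are absent from the set of contributing critical points isolated in the following subsections. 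The remaining verifications — that the points $x_k=\pm\alpha_k$ lie on the smooth locus of $\{H=0\}$ and that $P$ is nonzero there — are routine.
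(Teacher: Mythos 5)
Your proof is correct and rests, at its core, on the same structural fact the paper uses: reflectability forces each critical-point equation to factor so that it is satisfied precisely when $x_k^2=\alpha_k^2$ (up to a residual branch that is set aside). The difference is largely one of presentation. The paper invokes the decomposition $S(\balpha\bx^{-1})=\left(\frac{\alpha_k}{x_k}+\frac{x_k}{\alpha_k}\right)P_k(\bx)+Q_k(\bx)$ from Equation~\eqref{PQ} and reads off $0=tx_1\cdots x_d\,\alpha_k^{-1}(x_k^2-\alpha_k^2)P_k(\bx)$ directly; you derive the equivalent identity $P-x_j\partial_{x_j}P=(\alpha_j-\alpha_j^{-1}x_j^2)T_j(\bx)$ by expanding the inventory and pairing each step with its coordinate-$j$ reflection, which is exactly the computation hiding behind the paper's assertion (your $T_j$ is $\prod_{i\neq j}x_i$ times the paper's $P_j$ in the variables $\balpha\bx^{-1}$). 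You are more explicit than the paper in two spots that it glosses over: you rule out the branches $x_k=1$ of $H=0$ by observing $t\,\partial_t H=0\neq x_k\partial_{x_k}H$ there, whereas the paper jumps to $t=1/(x_1\cdots x_d\,S(\balpha\bx^{-1}))$ on the grounds that only one factor contains $t$; and you acknowledge that the residual branch $T_j(\bx)=0$ may genuinely have solutions, where the paper dismisses ``$P_k=0$'' as meaning $P_k$ is the zero Laurent polynomial (no step in the $k$-th direction), conflating the vanishing of $P_k$ at a point with its vanishing identically. Your closing remark that any such spurious solutions can be seen not to be minimal is a hand-wave, but it points at the same gap the paper itself leaves, and is harmless for the downstream analysis, which only uses the positive critical point and its sign variants.
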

\begin{proof}
The first critical point equation is $H(\bx;t)=0$. From this we deduce 
\[t=\frac{1}{x_1\dots x_d}{S(\balpha\bx^{-1})}, \]
since there is only one factor in which $t$ appears. We also see that if $\bx^*$ is in the closure of the domain of convergence, each component must satisfy $|x^*_i|\leq 1$ for $1\leq i \leq d$. 

The symmetry of the stepset gives $S(\bx;t)$ a particular form, which allows us to solve these explicitly. For each~$k$ we have:
\begin{equation}\label{PQ}
S(\balpha\bx^{-1})=\left(\frac{\alpha_k}{x_k}+ \frac{x_k}{\alpha_k}\right) P_k(\bx)+ Q_k(\bx)
\end{equation}
where $P_k$ and $Q_k$ contain \emph{no} $x_k$. Using this form we see that the equation $x_k\frac{\partial H(\bx;t)}{\partial x_k} = t\frac{\partial H(\bx;t)}{\partial t}$ is equivalent to:
\begin{equation}\label{eq:solutions}
0= tx_1\dots x_d\cdot \frac{1}{\alpha_k} \cdot (x_k^2-\alpha_k^2) \cdot P_k(\bx).
\end{equation}
The solution to \eqref{eq:solutions} occurs when either $x_k = \pm \alpha_k$ or $P_k=0$.  The latter  possibility is dismissed since it implies that the model has no step in the $k$-th dimension, contradicting the nontriviality hypothesis. 
\end{proof}
There is a unique minimal critical point in $\mathbb{R}_{>0}^d$,
 and we show in Section \ref{SubExp} that it determines the subexponential growth. As we see that $x_k=-\alpha_k$ is also a solution to \eqref{eq:solutions} we recognize that there are other minimal critical points, some of which can also contribute. We note that there are a finite number of solutions to \eqref{eq:solutions}, specifically $2^d$ many, and so using the lexicon of Pemantle and Wilson we have following corollary. 
\begin{corollary}\label{thm:minimalpoint}
The point $\bx^*=(\balpha^-, t_{\balpha^-})$, where
\begin{equation}\label{eq:defineta}
\balpha^-:=(\alpha_1^-, \dots, \alpha_d^-) \text{ where }\alpha_k^-=\min\{1, \alpha_k\}\quad      
\text{ and }\quad
t_{\balpha^-}:= \frac{1}{\alpha_1^- \dots\alpha_d^- S(\balpha^+)}
\end{equation}
is a finitely minimal point of $\frac{G(\bx)}{H(\bx;t)}$.
\end{corollary}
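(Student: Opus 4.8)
The plan is to verify that the specific point $\bx^* = (\balpha^-, t_{\balpha^-})$ is a finitely minimal point of $G/H$ in the sense of Pemantle and Wilson, by combining the previous lemma (which identifies all solutions of the critical-point system) with a direct check that this particular solution lies on the boundary of the domain of convergence. First I would observe that $\balpha^- = (\alpha_1^-, \dots, \alpha_d^-)$ has each coordinate in $(0,1]$, so the factors $1/(1-x_k)$ in $G/H$ are analytic at $\bx^*$ provided no $\alpha_k^- = 1$ creates a pole; but in fact the pole at $x_k = 1$ is cancelled by a zero of the numerator factor $\alpha_k^{-2}(\alpha_k^2 - x_k^2)$ exactly when $\alpha_k \ge 1$ (so $\alpha_k^- = 1$), so $G/H$ is genuinely a ratio whose only relevant singularities near $\bx^*$ come from $H = 0$. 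Then I would check that $H(\balpha^-, t_{\balpha^-}) = 0$: by the lemma the critical $t$ is $t = S(\balpha (\balpha^-)^{-1}) / (\alpha_1^- \cdots \alpha_d^-)$, and since $\alpha_k / \alpha_k^- = \alpha_k^+$ componentwise (as $\alpha_k^+ \alpha_k^- = \alpha_k$), this equals $S(\balpha^+)/(\alpha_1^-\cdots\alpha_d^-) $ — wait, one must be careful: $S(\balpha(\balpha^-)^{-1})$ evaluates the inventory at the point with coordinates $\alpha_k/\alpha_k^- = \alpha_k^+$, so indeed $t_{\balpha^-} = 1/(\alpha_1^-\cdots\alpha_d^- S(\balpha^+))$ matches \eqref{eq:defineta}.

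Next I would establish minimality, i.e. that $(\balpha^-, t_{\balpha^-})$ lies on the boundary of the polydisk domain of convergence of the Taylor expansion of $G/H$ at the origin, and that no point of the singular variety $H = 0$ has all coordinates strictly smaller in modulus. The key structural fact is that $H$ has the form $1 - t(x_1\cdots x_d)S(\balpha \bx^{-1})$ times analytic nonvanishing factors near the origin, and $S$ has nonnegative... actually the coefficients of $(x_1\cdots x_d)S(\balpha\bx^{-1})$ as a power series in $\bx, t$ are nonnegative (each step $\bsigma$ contributes a monomial $\balpha^{\bsigma} \bx^{\bo - \bsigma}$ with $\bo - \bsigma \in \{0,1,2\}^d$ and positive coefficient $\balpha^{\bsigma} > 0$). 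Hence by the standard Pringsheim-type argument for combinatorial diagonals, the domain of convergence is a "log-convex" coordinatewise-complete Reinhardt domain, and on each positive ray the first singularity is real positive; so a minimal point with positive coordinates exists, and by the lemma the only candidate with positive coordinates is exactly $(\balpha^-, t_{\balpha^-})$, forcing it to be minimal. Finiteness of the set of minimal points follows since the lemma gives only $2^d$ solutions to the full critical system, all of the form $x_k = \pm\alpha_k^-$ (the sign choices that keep $|x_k| \le 1$), and the "finitely minimal" terminology of Pemantle–Wilson just records that the minimal points form a finite set on which $H$ is smooth.

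The remaining point to nail down is smoothness of the variety $H = 0$ at $\bx^*$ (so that $\bx^*$ is a smooth, not multiple, point), which is needed for the later Fourier–Laplace analysis and is implicit in calling it "finitely minimal." This amounts to checking that $\nabla H \neq 0$ at $\bx^*$; since $\partial H / \partial t = -(x_1\cdots x_d)S(\balpha\bx^{-1})$ evaluated at $\bx^*$ equals $-\alpha_1^-\cdots\alpha_d^- S(\balpha^+) \ne 0$ (strictly positive weights, nonempty stepset), smoothness is immediate. I would then cite the relevant proposition from Pemantle–Wilson \cite{PeWi13} that a point satisfying the critical-point equations, lying on the boundary of the domain of convergence, at which the variety is smooth, and which is isolated among minimal points, is a finitely minimal critical point — and conclude.

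I expect the main obstacle to be the minimality argument: rigorously ruling out that some other point of $\{H = 0\}$ — in particular one where some coordinate is $0$, or one of the "spurious" solutions with $x_k = -\alpha_k^-$ or with $P_k = 0$ that were set aside — has strictly smaller coordinatewise modulus. The cleanest route is the nonnegativity-of-coefficients / Reinhardt-domain argument sketched above, invoking that $Q_\alpha(t) = \Diag(G/H)$ is a series with nonnegative coefficients and that the full multivariate series $G/H$ has a dominant singularity structure governed by the positive-real point; everything else (the sign variants, the $P_k = 0$ locus) then cannot be strictly smaller, only equal in modulus, which is exactly what "finitely minimal" allows. The bookkeeping with the cancellation of the $x_k = 1$ poles when $\alpha_k \ge 1$ is the one genuinely fiddly computation, but it is routine.
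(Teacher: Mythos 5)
Your proposal has the right scaffolding (Pringsheim/nonnegativity for minimality, the $2^d$ critical points from the lemma for finiteness), and that does match the spirit of the paper's one-line appeal to ``the lexicon of Pemantle and Wilson.'' But there are two concrete errors, both concentrated in how you handle coordinates with $\alpha_k>1$, and they matter because the whole content of the corollary is the replacement of $\alpha_k$ by $\alpha_k^-=\min\{1,\alpha_k\}$.

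First, the cancellation claim is wrong. You assert that the pole of $1/(1-x_k)$ at $x_k=1$ is cancelled by the numerator factor $\alpha_k^{-2}(\alpha_k^2-x_k^2)$ ``exactly when $\alpha_k\ge 1$.'' Evaluating at $x_k=1$ gives $\alpha_k^{-2}(\alpha_k^2-1)$, which vanishes \emph{only} when $\alpha_k=1$. For $\alpha_k>1$ this is strictly positive, so the pole at $x_k=1$ survives, and in fact $\bx^*$ then sits on the intersection of two irreducible components of $\mathcal{V}=\{H=0\}$: the hypersurface $1-t(x_1\cdots x_d)S(\balpha\bx^{-1})=0$ and the coordinate sheet $1-x_k=0$. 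This is precisely why the paper later performs a separate residue computation at $x_d=1$ in the ``Large Weights'' section; your proposal erases the phenomenon the paper is built around.

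Second, and consequently, the smoothness claim fails. You compute $\partial H/\partial t = -(x_1\cdots x_d)S(\balpha\bx^{-1})$ and declare it nonzero at $\bx^*$, but you dropped the factor $\prod_{k}(1-x_k)$ from $H$. With that factor restored, $\partial H/\partial t$ vanishes at $\bx^*$ whenever some $\alpha_k>1$ (since then $1-x_k^*=0$), and indeed $\bx^*$ is a non-smooth (multiple) point of $\mathcal{V}$. Smoothness is not part of the definition of ``finitely minimal,'' so you should not have invoked it; but asserting it is affirmatively incorrect here. Relatedly, your minimality step says ``by the lemma the only candidate with positive coordinates is $(\balpha^-,t_{\balpha^-})$,'' conflating the critical points of the smooth factor (which are $x_k=\pm\alpha_k$, possibly with $\alpha_k>1$, hence outside the closed unit polydisc) with the minimal singularity of the full $G/H$. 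The passage from $\alpha_k$ to $\alpha_k^-$ is exactly the step your argument needs to explain and does not: for $\alpha_k>1$ the bounding singularity in the $k$-th coordinate comes from the pole sheet $x_k=1$, not from the critical-point equation. You need to argue this case separately (and then address, as the paper implicitly does, that the remaining finiteness is taken relative to the reduced problem after the $x_k=1$ residues are extracted, since the literal torus intersection $T(\bx^*)\cap\{x_k=1\}$ is a positive-dimensional set when $\alpha_k>1$).
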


There are some cases where the critical points with $y_i= -x_i$ contribute to the asymptotic growth, but only the constant term is affected. The two conditions necessary for these points to contribute are stated in Proposition \ref{contrib}. The first condition is the magnitude of the weighted step function at the critical $\vert S(\balpha \mathbf{x})\vert$ is the same as at the positive critical point $S(\balpha^+)$. We now note that the first condition ensures that the point has the same exponential growth given in the following proposition. 
\begin{proposition}\label{ExpGrowth}
The exponential growth of $q_\alpha(n)$ is 
\begin{equation}
 \lim_{n\rightarrow \infty} q_\alpha(n)^{1/n} = \left|\alpha_1^-\dots\alpha_d^-\,t_{\balpha^-}\right|^{-1} =S(\balpha^+).
\end{equation}
\end{proposition}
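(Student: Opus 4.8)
The plan is to read off the exponential growth from the location of the minimal critical point established in Corollary~\ref{thm:minimalpoint}, using the standard transfer between the coordinates of a minimal point of a $d+1$-variable rational function and the exponential growth of its diagonal. By the theory in Pemantle and Wilson \cite{PeWi13}, if $\brho^\ast=(\rho_1,\dots,\rho_d,\rho_{d+1})$ is a (finitely) minimal point of $G/H$ lying on the boundary of the domain of convergence, then the diagonal coefficient sequence has exponential growth rate $|\rho_1\cdots\rho_d\rho_{d+1}|^{-1}$; more precisely $\limsup_n q_\alpha(n)^{1/n}=|\rho_1\cdots\rho_{d+1}|^{-1}$, and minimality upgrades this to a genuine limit. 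First I would invoke Corollary~\ref{thm:minimalpoint}, which identifies the minimal point as $\bx^\ast=(\balpha^-,t_{\balpha^-})$ with $\balpha^-=(\alpha_1^-,\dots,\alpha_d^-)$, $\alpha_k^-=\min\{1,\alpha_k\}$, and $t_{\balpha^-}=1/(\alpha_1^-\cdots\alpha_d^-\,S(\balpha^+))$.

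Next I would simply substitute: the product of the coordinates is
\[
\alpha_1^-\cdots\alpha_d^-\cdot t_{\balpha^-}=\alpha_1^-\cdots\alpha_d^-\cdot\frac{1}{\alpha_1^-\cdots\alpha_d^-\,S(\balpha^+)}=\frac{1}{S(\balpha^+)},
\]
so that $|\alpha_1^-\cdots\alpha_d^-\,t_{\balpha^-}|^{-1}=S(\balpha^+)$, which is exactly the claimed formula. Note $S(\balpha^+)=\sum_{\bs\in\mathcal{S}}(\balpha^+)^{\bs}>0$ since all weights $\alpha_i^+=\max\{\alpha_i,1\}$ are positive, so there is no absolute-value subtlety on the right-hand side, and $q_\alpha(n)\ge 0$ so the left-hand limit is over nonnegative reals.

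The one point requiring a little care — and the main (modest) obstacle — is justifying that the $\limsup$ is actually a limit and that no other critical point from the Lemma (those with some $x_k=-\alpha_k$) forces a larger growth rate. For the latter: every other solution $\brho$ of \eqref{eq:critical} has $|\rho_k|=\alpha_k^\pm$ componentwise in absolute value, hence the same $|\rho_1\cdots\rho_d|$, while $|t|=|S(\balpha\brho^{-1})|/|\rho_1\cdots\rho_d|\ge S(\balpha^+)^{-1}\cdot|\rho_1\cdots\rho_d|^{-1}$ by the triangle inequality $|S(\balpha\brho^{-1})|\le S(|\balpha\brho^{-1}|)=S(\balpha^+)$; thus no competing critical point yields exponential growth exceeding $S(\balpha^+)$, and $\bx^\ast$ being a genuine point of convergence-boundary with nonnegative coefficients gives the matching lower bound. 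The upgrade from $\limsup$ to $\lim$ follows because the coefficients are nonnegative and the diagonal series has radius of convergence exactly $t_{\balpha^-}$ (its other singularities being no closer, by the minimality in Corollary~\ref{thm:minimalpoint}), so Pringsheim's theorem pins the growth rate to $t_{\balpha^-}^{-1}\cdot(\alpha_1^-\cdots\alpha_d^-)^{-1}=S(\balpha^+)$. This is really a corollary of the setup already in place rather than an independent argument, so I expect the proof to be short.
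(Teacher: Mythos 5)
Your proof matches the paper's treatment: Proposition~\ref{ExpGrowth} is stated without an explicit proof and is understood as an immediate consequence of Corollary~\ref{thm:minimalpoint} together with the standard Pemantle--Wilson transfer from a finitely minimal point of $G/H$ to the exponential growth of its diagonal, exactly as you argue. Your additional remarks on nonnegativity of the coefficients and Pringsheim to upgrade the $\limsup$ to a genuine limit are sensible and fill in a gap the paper leaves tacit.

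One glitch worth fixing: the displayed chain $|t|=|S(\balpha\brho^{-1})|/|\rho_1\cdots\rho_d|\ge S(\balpha^+)^{-1}\,|\rho_1\cdots\rho_d|^{-1}$ inherits a typo from the lemma's formula for $t$. From $H(\bx;t)=0$ one actually has $t=1/\bigl((x_1\cdots x_d)\,S(\balpha\bx^{-1})\bigr)$, so $|\rho_1\cdots\rho_d\,t|^{-1}=|S(\balpha\brho^{-1})|\le S(\balpha^+)$ by the triangle inequality, which is the bound your conclusion needs; as written, the stated chain does not follow from $|S(\balpha\brho^{-1})|\le S(\balpha^+)$. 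Also, when some $\alpha_k>1$ the relevant boundary coordinate is $x_k=1$ (from the $(1-x_k)^{-1}$ factor) rather than $x_k=\pm\alpha_k$, so the family of ``other solutions of the critical-point equations'' is not quite the set of competitors; Corollary~\ref{thm:minimalpoint} already packages this correctly into $\balpha^-$, so the argument is unaffected, but the phrasing should be tightened.
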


\subsection{Subexponential growth}\label{SubExp} In order to determine the sub exponential growth of $q_\alpha(n)$, we express it as an iterated Cauchy integral. We simplify the integral in two stages: first to account for weights greater than 1, and then the weights less than or equal to 1. In order to simplify the presentation, we assume that the weights are in ascending order (reordering the dimensions if necessary): $\alpha_1\leq \dots\leq \alpha_m <1=\alpha_{m+1}=\dots\alpha_r=1< \alpha_{r+1}
\leq\dots\leq \alpha_d$.
We have
\begin{align}
\label{coeffex} q_\alpha(n) &=[x_1^{n}] [x_2^{n}]\cdots[x_d^{n}][t^{n}] \left(\frac{\prod_{k=1}^d \alpha_k^{2}(\alpha_k^2- {x_k^2})}{\left(1- tx_1\dots x_d S(\balpha \bx^{-1}) \right)\prod_{k=1}^d (1-x_k)} \right)
\end{align}
The next step is to use the multivariate Cauchy Integral Formula (CIF) to express this coefficient extraction as an integral. 
In order to do so, we must find a polydisc around the origin that does not contain a critical point. The polydisc that we choose falls into one of two cases. 

If there is a weight $ \alpha_k>1$ then we first note that 
\begin{align*}
[t^n] \frac{1}{\left(1- tx_1\dots x_d S(\balpha \bx^{-1}) \right)}= (x_1\dots x_d S(\balpha \bx^{-1}))^n,
\end{align*}
which allows us to rewrite Equation \eqref{coeffex} as 
\begin{align*}
[x_1^0][x_2^0]\cdots[x_d^0]\left( S(\balpha\bx^{-1})^n\, \prod_{k=1}^d \frac{\alpha_k^2- x_k^2}{\alpha_k^2\,(1-x_k)} \right),
\end{align*}
and take a polydisc in the remaining $d$ variables. In this case we define the region of integration as  $\vert x_k \vert = \alpha_k$ for $k \le r$ and $\vert x_k \vert = 1- \epsilon$ for $r+1 \le k \le d$, with the following integrand denoted as $\mathcal{I}_1(\bx)$, 
\begin{align*}
  \underbrace{S(\balpha\bx^{-1})^n\,\prod_{k=1}^d \frac{\alpha_k^2- x_k^2}{\alpha_k^2x_k\,(1-x_k)}}_{\mathcal{I}(\bx)}%
\, dx_d \cdots dx_1. \label{Cauchy}
\end{align*}
We first simplify this integral, and then we use a known theorem to estimate it to give the asymptotic form.
If there is no such weight then we take the polydisc 
\begin{align*}
\vert x_k \vert = \alpha_k,  \vert t \vert = \frac{1}{x_1\dots x_d S(\balpha \bx^{-1})} - \epsilon,
\end{align*}
 which is handled in Section \ref{Small Weights}.
In both cases we consider subtracting a larger integral (and adding on the corresponding error term) to do a residue evaluation. We detail the first case in the following section. 

\subsection{Large Weights}For each dimension in which the weight is more than 1, we can estimate the integral with a residue computation with a controlled error term. We show how to treat the innermost integral, and then repeat this process for all of the dimensions where the weight is greater than 1. This process will result in an expression with $r$ integrals remaining.

In order to estimate the integrals in variables with large weights, we use a residue computation which differs from the original integral by a small enough error term. We sketch how to do this for one variable,  $x_d$, but we can iterate the argument for each variable with large weights.  (Or, skip this entirely if $d=r$.)

We can show the integral of $\mathcal{I}_1(x)$ over $|x_d|=1+ \epsilon$ has exponential growth strictly less that $S(\balpha^+)$ using some elementary bounds.  Therefore, we know that for some constants $K>0$, and $M_\epsilon < S(\balpha^+)$, 
\begin{equation}
\left\vert \int\hspace{-3pt}\dots\hspace{-3pt}\int \int_{ \vert x_d \vert = 1 + \epsilon }  \mathcal{I}(\bx)\, dx_d \cdots dx_1  \right\vert \le K \, M_\epsilon^n.
\end{equation}
Therefore we can subtract off this integral and add an error term of $O(M_\epsilon^n)$, so that we can use the residue theorem inside the region $1- \epsilon \le \vert x_1 \vert \le 1+ \epsilon$. 
That is,
\begin{equation*}
q_\alpha(n)=  \left(\frac{1}{2\pi i}\right)^d \int\hspace{-3pt}\dots\hspace{-3pt} \int \left( \int_{ \vert x_d \vert = 1 - \epsilon}  -  \int_{ \vert x_d \vert = 1 + \epsilon} \mathcal{I}_1 (\bx)\, dx_d \right)\, dx_{d-1}\dots dx_1 +  O(M_\epsilon^n).
\end{equation*}
The only pole in the region is a simple pole is at $x_d=1$. Thus, the innermost integral evaluates to  $2\pi i\,(x_d-1)\mathcal{I}_1(\bx)$ evaluated at $x_d=1$. Thus,
\begin{align}\label{eqn:PostBigWeight}
q_\alpha(n) = &\frac{(\alpha_d^2-1)}{\alpha_d^2 (2\pi i)^{d-1}} \cdot \\
&\idotsint_{\vert x_j \vert = \alpha_j}
 S\left(\frac{\alpha_1}{x_1},\dots, \frac{\alpha_{d-1}}{x_{d-1}},\alpha_d\right)^n \, \prod_{k=1}^{d-1}\frac{\alpha_k^2- x_k^2}{ \alpha_k^2x_k\,(1-x_k)}  dx_{d-1} \cdots dx_1 + O(M_\epsilon^n). \nonumber
\end{align}
In short, we see that the the dimensions with large weights don't contribute to the subexponential growth of the dominant term.

\subsection{Small Weights}\label{Small Weights}
After processing the large weights we have:
\begin{align}\label{eqn:PostBigWeight}
\nonumber q_\alpha(n)& =\frac{\prod_{k=r+1}^d(\alpha_k^2-1)\alpha_k^{-2}}{(2\pi i)^{r}}\cdot\\
&\idotsint_{\vert x_j \vert = \alpha_j}
 S\left(\frac{\alpha_1}{x_1},\dots, \frac{\alpha_{r}}{x_{r}},\alpha_{r+1}, \dots ,\alpha_d\right)^n \, \prod_{k=1}^{r}\frac{\alpha_k^2- x_k^2}{ \alpha_k^2x_k\,(1-x_k)} \, dx_{r} \cdots dx_1 + O(M_\epsilon^n).
\end{align}
Alternatively, if there were no large weights to process, then we have the following integral,
\begin{align*}
q_\alpha(n)=\idotsint_{\vert x_j \vert = \alpha_j, \vert t \vert = t_{\alpha^{-}} - \epsilon}
  \underbrace{
\left(\frac{\prod_{k=1}^d \alpha_k^{2}(\alpha_k^2- {x_k^2})}{\left(1- tx_1\dots x_d S(\balpha \bx^{-1}) \right)\prod_{k=1}^d (1-x_k)} \right) \frac{1}{(x_1 \dots x_d t)^{n+1}}}_{\mathcal{I}_2(\bx)}dt \, dx_d \dots dx_1.
\end{align*}
Next we use a residue computation to evaluate the $t$ integral which will give an integral in the same form as Equation \eqref{eqn:PostBigWeight}.
\begin{align*}
\idotsint_{\vert x_j \vert = \alpha_j}\left( \int_{\vert t \vert = t_{\alpha^{-}} - \epsilon}-  \int_{\vert t \vert = t_{\small{\alpha^{-}}} + \epsilon}\right)\mathcal{I}_2(\bx)dt \, dx_d \dots dx_1 + O(N_\epsilon^n).
\end{align*}
This residue is more involved than the previous, and we refer to Pemantle and Wilson~\cite[Theorem 10.2.2]{PeWi13} for the necessary formula for evaluating to give
\begin{align*}
\idotsint_{\vert x_j \vert = \alpha_j}
 S(\balpha\bx^{-1})^{n} \, \prod_{k=1}^{d}\frac{\alpha_k^2- x_k^2}{ \alpha_k^2x_k\,(1-x_k)} \, dx_{d} \cdots dx_1 + O(N_\epsilon^n).
\end{align*}
In both cases we apply the following change of variables to the remaining $r$ variables (where $r=d$ when there are no large weights):
\begin{equation}\label{cov}
x_k = \alpha_k e^{i \theta_k};\quad d x_k = \alpha_k i e^{i \theta_k}d \theta_k.
\end{equation}
The integral part of this expression becomes
\begin{equation}\label{eq:estimateme}
\int_{[0, 2 \pi)^r} A( \btheta) e^{-n \phi( \btheta)} d \btheta
\end{equation}
with\footnote{Recall that $\alpha_{m+1}=\dots=\alpha_{r}=1$.}
\begin{equation}\label{subexp}
A(\btheta):= \prod_{k=1}^m \frac{  \left(1 - e^{2 i \theta_k} \right) }{  \left( 1 - \alpha_k e^{i \theta_k}\right) }\prod_{k=m+1}^r \left( 1+ e^{i \theta_k} \right).
\end{equation} and
\begin{equation}
\phi := \ln \left( S(\balpha^+)\right) - \ln \left( S\left(\frac{1}{ e^{i \theta_1}}, \cdots , \frac{1}{e^{i \theta_m}}, \frac{1}{e^{i \theta_{m+1}}}, \cdots, \frac{1}{e^{i \theta_r}} , \alpha_{r+1}, \dots, \alpha_{d} \right)\right).
\end{equation}
To estimate the integral in Eq.~\eqref{eq:estimateme}, we appeal directly to a theorem of H\"ormander \cite[Theorem 7.7.5]{Horm90}, rephrased by Pemantle and Wilson \cite[Theorem 13.3.2]{PeWi13}. In order to prove the formula for sub-exponential growth, we must determine the first non-zero value of $C_k$ in the equation below. Again, the symmetry will permit a useful simplification which is what allows us to obtain the general result. We note that the dimension of the integral below is $r$, following our simplification in the earlier section. 
\begin{thm}[H\"ormander; Pemantle and Wilson]\label{integral} 
Suppose that the functions $A(\btheta)$ and $\phi(\btheta)$ in $r$ variables are smooth in a neighbourhood $\mathcal{N}$ of the origin and that $\phi$ has a critical point at $\btheta = \bf{0}$; the Hessian $\mathcal{H}$ of $\phi$ at $\bf{0}$ is non-singular; $\phi(\bf{0})=0$; and the real part of $\phi(\btheta)$ is non-negative on $\mathcal{N}$. 

Then for any integer $M>0$ there are constants $C_0, \cdots, C_M$ such that 
\begin{equation}
\int_{\mathcal{N}} A (\btheta) e^{-n \phi(\btheta)} d \btheta = \left( \frac{ 2 \pi }{n} \right)^{r/2} \det{\left( \mathcal{H} \right)}^{-1/2} \cdot \sum_{j=0}^M C_j n^{-j} + O(n^{-M-1}).
\end{equation}
The constants $C_j$ are given by the formula: 
\begin{equation}\label{eq:Ck}
C_j =  (-1)^j  \sum_{\ell \le 2j} \frac{ \mathcal{D}^{\ell+j} ( A \underline{\phi}^\ell)\bf(0)}{2^{\ell+j} \ell! (\ell+j)!},
\quad\text{ with }
\quad\underline{\phi} := \phi - \langle \btheta, \mathcal{H} \btheta \rangle
\end{equation}
where $\mathcal{D}$ is the  differential operator
$
\mathcal{D} := \sum_{au,v} (\mathcal{H}^{-1})_{u,v} \, \frac{\partial}{\partial\theta_u} \frac{\partial}{\partial\theta_v}.
$
\end{thm}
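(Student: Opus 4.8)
The plan is to establish this by the classical Laplace / complex stationary-phase argument: localize the integral to a small ball around the critical point, rescale by $\sqrt{n}$ so that the quadratic part of $\phi$ becomes independent of $n$, Taylor-expand the remaining factors, and integrate the resulting series term by term against a Gaussian, bookkeeping the combinatorial constants. Concretely, I would first \emph{localize}. Since $\mathcal{H}$ is non-singular, $\phi(\bzer)=0$, $\bzer$ is the only critical point, and $\Re\phi\ge 0$ on $\mathcal{N}$, after shrinking $\mathcal{N}$ there are $\delta,c>0$ with $\Re\phi(\btheta)\ge c\,|\btheta|^2$ for $|\btheta|\le\delta$ and $\Re\phi(\btheta)\ge c\delta^2$ on $\mathcal{N}\setminus B_\delta(\bzer)$. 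The contribution of the complement of the ball is then bounded in absolute value by $\|A\|_\infty\,\mathrm{vol}(\mathcal{N})\,e^{-c\delta^2 n}=O(n^{-M-1})$, so it suffices to analyse $\int_{B_\delta(\bzer)}A(\btheta)e^{-n\phi(\btheta)}\,d\btheta$.

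Next I would \emph{rescale and expand}. The substitution $\btheta=\mathbf{s}/\sqrt{n}$ turns the localized integral into $n^{-r/2}\int_{\sqrt{n}B_\delta}A(\mathbf{s}/\sqrt{n})\,e^{-n\phi(\mathbf{s}/\sqrt{n})}\,d\mathbf{s}$. Writing $\phi=\langle\btheta,\mathcal{H}\btheta\rangle+\underline\phi$ with $\underline\phi$ vanishing to order $\ge 3$ at the origin, one has $n\phi(\mathbf{s}/\sqrt n)=\langle\mathbf{s},\mathcal{H}\mathbf{s}\rangle+n\,\underline\phi(\mathbf{s}/\sqrt n)$ and, since $\underline\phi$ is cubic-or-higher, $n\,\underline\phi(\mathbf{s}/\sqrt n)=O(|\mathbf{s}|^3 n^{-1/2})$. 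Expanding both $A(\mathbf{s}/\sqrt n)$ and $e^{-n\underline\phi(\mathbf{s}/\sqrt n)}=\sum_{\ell\ge 0}\tfrac{(-1)^\ell n^\ell}{\ell!}\,\underline\phi(\mathbf{s}/\sqrt n)^\ell$ as power series in $n^{-1/2}$ with polynomial coefficients in $\mathbf{s}$, multiplying out, and regrouping, produces a formal expansion $\sum_{k\ge 0}n^{-k/2}\int B_k(\mathbf{s})\,e^{-\langle\mathbf{s},\mathcal{H}\mathbf{s}\rangle}\,d\mathbf{s}$ with each $B_k$ a polynomial; the odd-$k$ terms vanish because $B_k$ then contains only monomials of odd total degree, which integrate to zero against the even weight $e^{-\langle\mathbf{s},\mathcal{H}\mathbf{s}\rangle}$, leaving only integer powers of $n^{-1}$. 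A degree count shows that a factor $\underline\phi^\ell$ (each occurrence of which costs at least $3\ell/2$ powers of $n^{-1/2}$ and returns $\ell$ of them) contributes to the coefficient of $n^{-j}$ only when $\ell\le 2j$, which explains the summation range in \eqref{eq:Ck}.

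Then I would \emph{evaluate the Gaussian moments}. Using the Wick-type identity $\int_{\mathbb{R}^r}P(\mathbf{s})\,e^{-\langle\mathbf{s},\mathcal{H}\mathbf{s}\rangle}\,d\mathbf{s}=\pi^{r/2}\det(\mathcal{H})^{-1/2}\big(e^{c_0\mathcal{D}}P\big)(\bzer)$ for a normalisation constant $c_0$ and $\mathcal{D}=\sum_{u,v}(\mathcal{H}^{-1})_{u,v}\partial_u\partial_v$, each surviving term of order $n^{-j}$ reduces to a derivative of $A\,\underline\phi^\ell$ at the origin: the $\ell!$ comes from the exponential series for $e^{-n\underline\phi}$, the powers of $2$ and the factorial $(\ell+j)!$ come from $e^{c_0\mathcal{D}}$ (the total degree of the surviving monomials forces exactly $\ell+j$ applications of $\mathcal{D}$), the sign $(-1)^j$ comes from combining $(-1)^\ell$ with the parity selection, and the prefactor matches $\big(\tfrac{2\pi}{n}\big)^{r/2}\det(\mathcal{H})^{-1/2}$ after accounting for the normalisation of $\langle\cdot,\cdot\rangle$. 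Matching coefficients term by term then yields precisely formula \eqref{eq:Ck} for $C_0,\dots,C_M$.

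Finally, \emph{controlling the remainder} is the technical heart and the main obstacle. One truncates each Taylor expansion at the order needed to reach $n^{-M-1}$ and estimates the remainders by Taylor's theorem with integral remainder; the difficulty is that $\phi$ is complex-valued, so $e^{-n\phi}$ is not literally a decaying Gaussian and the interchange of sum and integral is not automatic. Here all the hypotheses are used: non-negativity of $\Re\phi$ together with the quadratic lower bound near $0$ and the non-degeneracy of $\mathcal{H}$ give, after further shrinking $\delta$, a pointwise bound $|e^{-n\phi(\mathbf{s}/\sqrt n)}|\le K\,e^{-c'|\mathbf{s}|^2}$ on $\sqrt{n}B_\delta$ for constants $K,c'>0$, providing an $n$-independent integrable majorant (the case where $\Re\mathcal{H}$ is itself degenerate is the delicate one and forces one to exploit the full nonsingularity of the complex $\mathcal{H}$ rather than just its real part). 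This majorant legitimises the term-by-term integration, bounds the tail by a constant times $n^{-M-1}\int_{\mathbb{R}^r}(\text{poly})\,e^{-c'|\mathbf{s}|^2}\,d\mathbf{s}=O(n^{-M-1})$, and combines with the exponentially small off-ball contribution from the first step. Assembling the three pieces gives the stated expansion; this is Hörmander's Theorem 7.7.5 \cite{Horm90}, and I expect the genuinely non-routine part to be the uniform Gaussian majorant for a complex phase and the resulting justification of integrating the asymptotic series, not the formal identification of the $C_j$.
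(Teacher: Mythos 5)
The statement you are asked to prove is not actually proved in the paper: it is a cited result (Hörmander \cite[Theorem 7.7.5]{Horm90}, as rephrased in Pemantle--Wilson \cite[Theorem 13.3.2]{PeWi13}), and the paper merely records the statement and applies it. So there is no ``paper's proof'' to compare against. Your outline is the right classical strategy (localize, rescale by $\sqrt n$, expand, Gaussian moment calculus, remainder control), and the formal bookkeeping --- the degree count giving $\ell\le 2j$, the Wick-type reduction to powers of $\mathcal D$, the parity argument killing the half-integer powers of $n$ --- is essentially the correct way to arrive at \eqref{eq:Ck}.

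There is, however, a genuine gap in your localization and majorization, and it is precisely the point you flag parenthetically but do not resolve. You assert that the hypotheses ``$\mathcal H$ non-singular, $\phi(\bzer)=0$, $\Re\phi\ge 0$ on $\mathcal N$'' imply, after shrinking, a bound $\Re\phi(\btheta)\ge c|\btheta|^2$, and you then use the rescaled version $|e^{-n\phi(\mathbf s/\sqrt n)}|\le K e^{-c'|\mathbf s|^2}$ as an $n$-independent integrable majorant. This does not follow. The Hessian of $\Re\phi$ at $\bzer$ is $\Re\mathcal H$, which can be singular even when the complex matrix $\mathcal H$ is non-singular: for example $\phi(\theta_1,\theta_2)=\theta_1^2+i\theta_2^2$ satisfies every hypothesis, yet $\Re\phi$ vanishes identically along the $\theta_2$-axis and no quadratic lower bound (hence no Gaussian majorant, and no exponentially small off-ball estimate) exists. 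Your entire remainder argument is built on this pointwise decay, so it collapses in exactly the case you identify as ``delicate.'' The actual proof does not rely on pointwise decay of $|e^{-n\phi}|$: Hörmander localizes by a \emph{non-stationary phase} estimate (repeated integration by parts using $\nabla\phi\ne 0$ away from $\bzer$, which you also silently assume but which is not in the theorem statement as quoted), and handles the oscillatory core by a complex Morse-lemma change of variables reducing $\phi$ to a pure quadratic, rather than by dominated convergence against a real Gaussian. Appealing to ``the full non-singularity of the complex $\mathcal H$'' is the right instinct, but it must be implemented through the contour deformation or integration-by-parts machinery, not by strengthening a pointwise bound that is false.

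A smaller point: the formula $\underline\phi=\phi-\langle\btheta,\mathcal H\btheta\rangle$ together with the claim that $\underline\phi$ vanishes to order three pins down the normalization ($\mathcal H$ is half the usual Hessian, so that $\phi=\langle\btheta,\mathcal H\btheta\rangle+O(|\btheta|^3)$). Your Wick identity with an undetermined $c_0$ and an unspecified normalisation of $\langle\cdot,\cdot\rangle$ leaves this loose; to actually recover the stated prefactor $(2\pi/n)^{r/2}\det(\mathcal H)^{-1/2}$ and the factors $2^{\ell+j}(\ell+j)!$ you need to fix these conventions consistently rather than match them ``after accounting for normalisation.''
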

We satisfy the conditions of the theorem as seen below by calculating the partial derivatives. The dominant asymptotics are determined by the integration around a small neighborhood of the critical points. The final asymptotics are then the sum of the asymptotics over each critical point. Below we show what the computation is like for the unique positive critical point. The analysis for critical points with negative components is similar.  

The computation of the $C_j$ is then:
\begin{equation}
S\left(e^{-i\btheta} \right)= 
\left( e^{-i \theta_k} + e^{i \theta_k} \right) P_k+Q_k
\quad\implies\quad
\frac{\partial}{\partial \theta_k} \phi(\btheta) = \frac{ \left( -i e^{-i \theta_k} + ie^{i \theta_k} \right) P_k }{\left( e^{-i \theta_k} + e^{i \theta_k} \right)P_k+Q_k},
\end{equation}
since $P_k$ and $Q_k$ have no $\theta_k$. We can see that this is zero when $\theta_k=0$, and indeed any mixed partials will evaluate to 0 when $\theta=0$. Then the second order partial with respect to $\theta_k$ is
\begin{align*}
\frac{\partial^2}{\partial^2\theta_k} \phi(\mathbf{0}) = -\frac{2P_k(\mathbf{0})}{2P_k (\mathbf{0})+Q_k (\mathbf{0})},
\end{align*}
which are subtracted off by $\underline{\phi}$ so that the function vanishes at all second derivatives.

This kind of analysis, and a similar analysis of $A$, which factors into a product such that each multiplicand has a single $\theta_k$, is crucial to the proof of the following lemma. 
\begin{lemma}\label{first}
For weights $\alpha_1, \cdots ,\alpha_m <1$,  $\alpha_{m+1}= \dots=\alpha_r=1$, and $A, \underline{\phi}$, as defined above, the first $j$ such that $C_j$ in Eq.~\eqref{eq:Ck} is nonzero is $m$,  and the only nonzero term in the sum for $C_m$ is $\ell=0$. 
\end{lemma}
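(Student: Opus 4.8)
Here is how I would prove Lemma~\ref{first}.

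\smallskip

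The plan is to exploit, at every stage, the symmetries $\theta_k\mapsto-\theta_k$ forced by reflectability. First I would observe that since $\mathcal S$ is invariant under reflection across each axis, the inventory $S(\bx)$ is invariant under each substitution $x_k\mapsto x_k^{-1}$; hence $S\!\left(e^{-i\theta_1},\dots,e^{-i\theta_r},\alpha_{r+1},\dots,\alpha_d\right)$ is invariant under $\theta_k\mapsto-\theta_k$ for every $k\le r$, i.e. it is even in each $\theta_k$ separately, and therefore so is $\phi$. Two consequences follow. All mixed second derivatives of $\phi$ vanish at $\mathbf{0}$ (as already noted in the text from the explicit form of $\partial_{\theta_k}\phi$), so the Hessian $\mathcal H$ is diagonal, $\mathcal H^{-1}$ is diagonal, and the operator $\mathcal D$ reduces to a combination $\sum_{k=1}^r (\mathcal H^{-1})_{kk}\,\partial^2/\partial\theta_k^2$ of pure squared derivatives. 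Moreover $\underline\phi=\phi-\langle\btheta,\mathcal H\btheta\rangle$ is even in each $\theta_k$ and, being $\phi$ with its degree-$\le 2$ Taylor part removed, vanishes to order $\ge 3$, hence (by the evenness just noted) to order $\ge 4$ at $\mathbf{0}$.

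\smallskip

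Next I would record an elementary fact: since $\mathcal D$ is built from the $\partial^2/\partial\theta_k^2$, expanding $\mathcal D^N$ by the multinomial theorem and applying it to a Taylor series shows that $\mathcal D^N(F)(\mathbf{0})$ is a linear combination of the Taylor coefficients at $\mathbf{0}$ of the monomials $\theta_1^{2n_1}\cdots\theta_r^{2n_r}$ with $\sum_k n_k=N$; only monomials of total degree $2N$ all of whose exponents are even can contribute. Since $\underline\phi$, and hence $\underline\phi^{\ell}$, involves only such all-even-exponent monomials, the part of $A\underline\phi^{\ell}$ even in every variable equals $A^{\mathrm{ev}}\underline\phi^{\ell}$, where $A^{\mathrm{ev}}$ is the part of $A$ even in every $\theta_k$; therefore $\mathcal D^{\ell+j}(A\underline\phi^{\ell})(\mathbf{0})=\mathcal D^{\ell+j}(A^{\mathrm{ev}}\underline\phi^{\ell})(\mathbf{0})$. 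Because $A$ is a product of one-variable factors, $A^{\mathrm{ev}}$ is the product of their even parts: for $m<k\le r$ the factor $1+e^{i\theta_k}$ has even part $1+\cos\theta_k$, a unit near $\mathbf{0}$, while a short computation — which is precisely where the hypothesis $\alpha_k\ne 1$ enters — shows that the even part of $(1-e^{2i\theta_k})/(1-\alpha_k e^{i\theta_k})$ equals $\tfrac{2}{(1-\alpha_k)^2}\theta_k^2+O(\theta_k^4)$, so it vanishes to order exactly $2$. Each such factor then contributes a single degree-$2$ monomial, so $A^{\mathrm{ev}}$ vanishes to order exactly $2m$ at $\mathbf{0}$ with unique lowest-degree monomial $\kappa\,\theta_1^2\cdots\theta_m^2$, where $\kappa=2^{\,r-m}\prod_{k=1}^m 2(1-\alpha_k)^{-2}\ne 0$.

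\smallskip

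With these ingredients the conclusion is short. Combining orders, $A^{\mathrm{ev}}\underline\phi^{\ell}$ vanishes to order $\ge 2m+4\ell$ at $\mathbf{0}$, so $\mathcal D^{\ell+j}(A\underline\phi^{\ell})(\mathbf{0})=0$ unless $2(\ell+j)\ge 2m+4\ell$, that is, unless $\ell\le j-m$. For $j<m$ there is no admissible $\ell\ge 0$, so every summand of \eqref{eq:Ck} vanishes and $C_j=0$. For $j=m$ the only admissible value is $\ell=0$, giving $C_m=(-1)^m\mathcal D^m(A)(\mathbf{0})/(2^m m!)=(-1)^m\mathcal D^m(A^{\mathrm{ev}})(\mathbf{0})/(2^m m!)$; only the lowest-degree monomial of $A^{\mathrm{ev}}$ survives $\mathcal D^m$ at $\mathbf{0}$, and $\mathcal D^m(\theta_1^2\cdots\theta_m^2)(\mathbf{0})=2^m m!\prod_{k=1}^m(\mathcal H^{-1})_{kk}$, whence $C_m=(-1)^m\kappa\prod_{k=1}^m(\mathcal H^{-1})_{kk}$, nonzero since $\mathcal H$ is nonsingular.

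\smallskip

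I expect the only real work to be bookkeeping: making rigorous that $\mathcal D^N(\cdot)(\mathbf{0})$ sees only all-even-exponent monomials, that this licenses replacing $A$ by $A^{\mathrm{ev}}$ throughout, and then tracking orders of vanishing cleanly. There is no deep obstacle once the parity of $\phi$ is in hand — that one observation does all the work. Without it $A$ would vanish only to order $m$ and $\underline\phi$ only to order $3$, the threshold above would weaken to $\ell\le 2j-m$, and nothing would prevent $C_j\ne 0$ for $\lceil m/2\rceil\le j<m$; it is exactly the invariance under $\theta_k\mapsto-\theta_k$ that both annihilates those intermediate terms and isolates the single monomial $\theta_1^2\cdots\theta_m^2$ responsible for the leading constant.
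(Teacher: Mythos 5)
Your argument is correct. It rests on the same pillars as the paper's proof --- the factorization of $A$ into univariate pieces, the diagonality of $\mathcal{H}$ and hence the purely quadratic form of $\mathcal{D}$, and the evenness in each $\theta_k$ forced by reflectability --- but it reorganizes the degree accounting in a cleaner and more decisive way. The move the paper does not make explicit is the replacement of $A$ by its all-even part $A^{\mathrm{ev}}$: because $\underline{\phi}^{\ell}$ contains only all-even monomials and $\mathcal{D}^{\ell+j}(\cdot)(\mathbf{0})$ reads off only all-even monomials of total degree $2(\ell+j)$, only $A^{\mathrm{ev}}$ contributes, and $A^{\mathrm{ev}}$ vanishes to order $2m$ rather than merely $m$. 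Combined with $\underline{\phi}$ vanishing to order $4$ rather than $3$, this collapses both claims of the lemma into the single inequality $j\ge m+\ell$. The paper instead treats the $\ell=0$ terms (via the order-$m$ vanishing of $A$ and the fact that a single term of $\mathcal{D}^{N}$ can touch all of $\partial_1,\dots,\partial_m$ only when $N\ge m$) and the $\ell\ge1$ terms (via a Leibniz-distribution argument combining $\mathcal{D}^{\ell}\underline{\phi}^{\ell}(\mathbf{0})=0$ with the parity of $\underline{\phi}$) separately, and the bookkeeping that kills the terms with $j<m$, $\ell\ge1$ and $j+\ell\ge m$ is less transparently isolated there. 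Your version is also more complete in one respect: you establish $C_m\ne0$ explicitly, by identifying the unique degree-$2m$ monomial $\kappa\,\theta_1^2\cdots\theta_m^2$ of $A^{\mathrm{ev}}$ and evaluating $\mathcal{D}^m$ on it, whereas the paper's proof shows the other contributions vanish but does not argue the nonvanishing of the surviving term.
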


\begin{proof}
First, observe that $A(\theta_1, \cdots, \theta_r)$ can be written as 

\begin{align*}
A(\btheta):= \prod_{j=1}^r A_j(\theta_j) = \prod_{k=1}^m \frac{  \left(1 - e^{2 i \theta_k} \right) }{  \left( 1 - \alpha_k e^{i \theta_k}\right) }\prod_{k=m+1}^r \left( 1+ e^{i \theta_k} \right).
\end{align*}

Since $A_j(0)=0$ for $1 \le  j \le m,$ 
the composition of a differential operator applied to $A$ evaluated at $\bf{0}$ is a nonzero map only if the operator has a term with each $\partial_k$ for $1 \le k \le m$. Given that $\mathcal{D}$ does not have any mixed partials, this only happens once $\mathcal{D}$ is raised to the $m^{th}$ power, which proves the first claim in Lemma \ref{first}. 

Next we consider which differentials applied to $\underline{\phi}$ evaluated at $(\bf{0})$ are non-zero. Due to the assumptions on $\phi$ in Theorem~\ref{integral} and the definition of $\underline{\phi}$, we know that $\underline{\phi}$ vanishes to order three at $\bf{0}$. Therefore, $\mathcal{D}\underline{\phi} ( \mathbf{0})=0$ and so  $\mathcal{D}^\ell \underline{\phi}^\ell ( \mathbf{0})=0$ for each $l \ge 1$. As we want to consider derivatives of order three and higher of $\underline{\phi}$ we need only consider those of $\phi$. Since $\phi$ is constructed using the stepset, it has a term of the form $(e^{-i \theta_k}+ e^{i \theta_k})P_k+Q_k$ for each $k$. Therefore, for partials in some variable to an odd degree then the evaluation at $(\bf{0})$ is 0.

That is, all order four or higher derivatives that can be formed from a product of the
$\partial_1 \cdots \partial_r$ and  $\mathcal{D}^\ell$ have a partial to an odd power, so it annihilates $\underline{\phi}^\ell(\bf{0})$. Combining this with $\mathcal{D}^\ell \underline{\phi}^\ell ( \mathbf{0})=0$, we conclude that  $\mathcal{D}^{m+\ell} A  \underline{\phi}^\ell (\mathbf{0})=0$ for $l \ge 1$.
Therefore, the first nonzero term is $C_m$ and the only nonzero term in the sum is $\ell=0$.
\end{proof}

Note that the integral of interest is over $[0, 2\pi)^r$, but the contributing part of the integral is only in smooth neighborhoods of critical points. Suppose that there is a critical point at $\bf{0}$ and $\bf{\pi}$. Then we express the integral as 

\begin{align*}
\int_{[0, 2 \pi)^r} A( \btheta) e^{-n \phi( \btheta)} d \btheta &= \left( \int_{[0, \epsilon)^r}+ \int_{[\epsilon, \pi-\epsilon)^r}+ \int_{[\pi-\epsilon, \pi+\epsilon)^r}+ \int_{[\pi+\epsilon, 2\pi)^r} \right) A( \btheta) e^{-n \phi( \btheta)} d \btheta,
\end{align*}
where only the first and third integrals contribute to the dominant asymptotics, which are calculated using Theorem \ref{integral}.

Let $\mathcal{C}_p$ be the projection of the critical points onto the $r$ dimensions with weight $\alpha_j \le 1$, and let $\btau \in \mathcal{C}_p$ be the projected critical point under the change of variables given in Equation \eqref{cov}.
In general, we know that all contributing critical points will be isolated, so we can always break the region of integration into sums of integral over regions of the critical points, and regions which don't contribute. 
Thus, we have 
\begin{align*}
\int_{[0, 2 \pi)^r} A( \btheta) e^{-n \phi( \btheta)} d \btheta \sim \sum_{\btau \in \mathcal{C}_p} \int_{\btau-\bepsilon}^{ \btau+ \bepsilon} A( \btheta) e^{-n \phi( \btheta)} d \btheta.
\end{align*}
In the case with one contributing critical point, applying Theorem~\ref{integral} gives
\begin{equation}\label{UsingThm}
\int_{\mathcal{N}} A (\btheta) e^{-n \phi(\btheta)} d \btheta \sim
 \left( \frac{ 2 \pi }{n} \right)^{r/2} \det{\left( \mathcal{H} \right)}^{-1/2} n^{-m} \cdot (-1)^m \frac{ \mathcal{D}^m A( \bf{0}) }{2^m m!},
\end{equation}
and so the subexponential growth is $n^{-r/2-m}$ as claimed in Theorem~\ref{thm:MAIN}. 

Now consider the subexponential growth for a critical point which has a negative coordinate. If the weight in that coordinate is greater than 1, we can't use the same technique of applying the residue theorem with an error term, as the singularity is at -1 instead of 1. Thus, we would get a higher order term in the exponential growth by having another variable in our application of Theorem~\ref{integral}. Similarly, if the weight in that coordinate is exactly 1, then the numerator of $A(\theta_i)$ will vanish to a higher degree, and so the subexponential term will be larger. Thus, the only way that the subexponential term is the same is when there is a dimension with weight less than 1. 
This, combined with Proposition~\ref{ExpGrowth} gives us the following classification of contributing critical points. 
\begin{proposition}\label{contrib}
The contributing critical points are points $\bz \in \mathcal{C}$ satisfying  $S(\bz)= S(\balpha^+)$.  Furthermore, $z_j > 0 $ unless $\alpha_j<1$. 
\end{proposition}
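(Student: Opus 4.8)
The plan is to establish Proposition~\ref{contrib} by combining the exponential-growth bound of Proposition~\ref{ExpGrowth} with a coordinate-by-coordinate inspection of how each dimension contributes to the Fourier--Laplace integral, exactly as sketched in the paragraphs preceding the statement. First I would recall that a critical point $\bz\in\mathcal{C}$ has coordinates $z_k=\pm\alpha_k$ by the Lemma, and that the exponential growth it induces is $|S(\balpha\bz^{-1})|$; since Proposition~\ref{ExpGrowth} identifies the true exponential growth as $S(\balpha^+)$, a point cannot be contributing unless $|S(\bz^{-1}\balpha)|=|S(\balpha/\bz)|$ equals $S(\balpha^+)$. Writing $S(\bz)=\sum_{\bs\in\mS}\bz^\bs$ and using that $\bz^{-1}=\bz$ coordinatewise when $z_k=\pm1$ and otherwise $\alpha_k/z_k=\pm1$, one checks that $S(\balpha/\bz)=S(\bz')$ for a sign vector $\bz'$ whose $k$-th coordinate is $\alpha_k^+$ up to a sign; by the triangle inequality $|S(\bz')|\le S(\balpha^+)$ with equality forcing the sign pattern to be coherent with the reflectable structure. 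This gives the condition $S(\bz)=S(\balpha^+)$ in the statement (after re-expressing in the normalization used there).

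Next I would handle the clause $z_j>0$ unless $\alpha_j<1$, treating the two bad cases $\alpha_j>1$ and $\alpha_j=1$ separately. For $\alpha_j>1$: in the ``Large Weights'' reduction we performed a residue computation at $x_j=1$, valid precisely because the pole $x_j=1$ lies strictly inside the annulus $1-\epsilon\le|x_j|\le1+\epsilon$ while $|x_j|=\alpha_j$; a critical point with $z_j=-\alpha_j$ has $x_j=-\alpha_j$, so the relevant singularity is effectively at $x_j=-1$ and the same contour-pushing argument does not eliminate this variable. Consequently this dimension survives into the application of Theorem~\ref{integral}, raising $r$ by one and producing a strictly smaller (more negative) exponent $n^{-r/2-m}$ — so the point is non-contributing. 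For $\alpha_j=1$: the factor $A_j(\theta_j)=1+e^{i\theta_j}$ in Equation~\eqref{subexp} does not vanish at $\theta_j=0$, but at the critical point corresponding to $z_j=-1$ the local coordinate is shifted to $\theta_j=\pi$, where $1+e^{i\theta_j}$ vanishes; this extra vanishing of $A$ at that critical point forces, via the formula~\eqref{eq:Ck} for $C_j$ and the argument of Lemma~\ref{first}, the first nonzero $C_j$ to occur at a strictly larger index, again yielding a strictly smaller subexponential term.

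Finally I would assemble these observations: any $\bz\in\mathcal{C}$ with $S(\bz)\ne S(\balpha^+)$ is discarded by the exponential-growth comparison, and any $\bz$ with a negative coordinate $z_j$ in a dimension where $\alpha_j\ge1$ is discarded by the subexponential comparison just described; the remaining points are exactly those with $S(\bz)=S(\balpha^+)$ and $z_j>0$ whenever $\alpha_j\ge1$, which is the claimed characterization. I expect the main obstacle to be making rigorous the equality case of the triangle inequality in the first paragraph — i.e.\ showing that $|S(\balpha/\bz)|=S(\balpha^+)$ genuinely forces $S(\bz)=S(\balpha^+)$ rather than merely $|S(\bz)|=S(\balpha^+)$ with a rogue phase — and, relatedly, verifying that in the $\alpha_j=1$ case the higher-order vanishing of $A_j$ at $\theta_j=\pi$ is not accidentally cancelled by a compensating degeneracy of the Hessian of $\phi$ at that point; the reflectable product structure $S=(\alpha_k/x_k+x_k/\alpha_k)P_k+Q_k$ of Equation~\eqref{PQ}, which makes $A$ and $\phi$ split as products/sums over single variables, is what rules this out, so the argument should go through cleanly once that structure is invoked.
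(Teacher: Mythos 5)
Your proposal reproduces the paper's own (informal) argument essentially verbatim: invoke Proposition~\ref{ExpGrowth} for the exponential-growth constraint, then rule out $z_j<0$ coordinate by coordinate — for $\alpha_j>1$ the residue reduction at $x_j=1$ fails, and for $\alpha_j=1$ the factor $A_j(\theta_j)=1+e^{i\theta_j}$ acquires an extra zero at $\theta_j=\pi$, pushing the first nonvanishing $C_j$ to a higher index. The two soft spots you flag (the $\lvert S(\bz)\rvert$ versus $S(\bz)$ distinction, which the paper itself blurs since it later admits parity-dependent contributions from $S(\bz)=-S(\balpha^+)$, and the Hessian nondegeneracy at $\theta_j=\pi$) are genuine and are glossed over by the paper as well.
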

%
%
%
%
%
%
%
%
We combine Equation \eqref{UsingThm} and Equation~\eqref{eqn:PostBigWeight} to get:
\begin{align*}
q_\alpha(n) &\sim \frac{\prod_{k=r+1}^d \alpha_k^{-2}(\alpha_k^2-1)}{(2\pi i)^{r}}\cdot S(\balpha^+)^n i^r \int_{\mathcal{N}} A (\btheta) e^{-n \phi(\btheta)} d \btheta \\
&\sim \frac{\prod_{k=r+1}^d \alpha_k^{-2}(\alpha_k^2-1)}{(2\pi i)^{r}}\cdot S(\balpha^+)^n i^r 
\sum_{\btau \in \mathcal{C}_p}
 \left( \frac{ 2 \pi }{n} \right)^{r/2} \det{\left( \mathcal{H}(\btau) \right)}^{-1/2} n^{-m} \cdot (-1)^m \frac{ \mathcal{D}^m A( \btau) }{2^m m!}\\
 &= \gamma \cdot S(\balpha^+)^n \cdot n^{-(r/2)-m},
 \end{align*}
where we can now calculate the constant $\gamma$. 
 
Note that the constant is calculated for each critical point which contributes to the asymptotics, which we write as $\Phi_{\alpha, \bz}(n)$ for each $\bz \in \mathcal{C}$. Thus the constant $\gamma$ in Theorem \ref{thm:MAIN} is computed as
\begin{align}\label{gamma}
\gamma = \sum_{z \in \mathcal{C}} \prod_{j=1}^d c(z_j).
\end{align}

The constant factor of a critical point a product of factors $c(z_j)$, given below. In cases where multiple critical points contribute, the constant term can depend on the parity of $n$.  Since we know there is always a contributing point with positive exponential growth, then if a contributing point has an exponential growth of $(-S(\balpha^+))$, then the corresponding contributions are added when $n$ is even, and subtracted when $n$ is odd. For a given contributing critical point with component $z_j$ and step set with $P_j$ steps in the positive $j$ direction, the constant term is calculated as:
\begin{align}\label{ConstantCase}
c(z_j)=
\begin{cases} 
      1-\frac{1}{\alpha_j^2} & \alpha_j>1  \\
      \frac{1}{\sqrt{2 \cdot \pi }} \cdot  (2 P_j)^{-1/2}\cdot \sqrt{S(\balpha^+)} \cdot 2 & \alpha_j=1\\
      \frac{1}{\sqrt{2 \cdot \pi }} \cdot  (2 P_j)^{-3/2}\cdot (S(\balpha^+))^{3/2}\cdot \frac{2}{(1-\alpha_j)^2}  & \alpha_j<1, z_j=\alpha_j \\ 
      \frac{1}{\sqrt{2 \cdot \pi }} \cdot  (2 P_j)^{-3/2}\cdot (S(\balpha^+))^{3/2}\cdot \frac{2}{(1+\alpha_j)^2}  & \alpha_j<1, z_j=-\alpha_j
   \end{cases}.
\end{align}

Throughout the article we have considered the weighting to be central. This makes the notation simpler as a weighting can be interpreted as an evaluation, but the analysis applies to a more general weighting. In particular, the following corollary extends our results to a weighting formed by a product of a central weighting and a highly symmetric weighting. 


\begin{corollary}[Non-Central Weights]
Let $\mathcal{S}$ be a reflectable stepset, and let $\bomega$ be a weighting on the steps such that the weighted step set $\mathcal{S}_\bomega$ is symmetric over every axis. Let $\balpha$ be a central weighting on $\mathcal{S}$, and $q_{\alpha, \omega}(n)$ count the weight of all walks of length $n$ from the set $\mathcal{S}$, remaining in the positive orthant. Then
\begin{align*}
q_{\alpha,\bomega}(n)\sim \gamma \cdot S_\bomega(\balpha^+)^n \cdot n^{-(r/2)-m}, 
\end{align*}
with $\balpha^+,r,m$ as defined in Theorem~\ref{thm:MAIN}.
Moreover, writing $S_\bomega(\bx)=\left(x_k+ \frac{1}{x_k}\right)P_k(\bx)+Q_k(\bx),$ where $P_k,  Q_k$ contain no $x_k$ as in Equation \eqref{PQ}, the constants are calculated as
\begin{align}
c(z_j)=
\begin{cases} 
      1-\frac{1}{\alpha_j^2} & \alpha_j>1  \\
      \frac{1}{\sqrt{2 \cdot \pi }} \cdot  (2 P_j(\bo))^{-1/2}\cdot \sqrt{S_\bomega(\balpha^+)} \cdot 2 & \alpha_j=1\\
      \frac{1}{\sqrt{2 \cdot \pi }} \cdot  (2 P_j(\bo))^{-3/2}\cdot (S_\bomega(\balpha^+))^{3/2}\cdot \frac{2}{(1-\alpha_j)^2}  & \alpha_j<1, z_j=\alpha_j \\ 
      \frac{1}{\sqrt{2 \cdot \pi }} \cdot  (2 P_j(\bo))^{-3/2}\cdot (S_\bomega(\balpha^+))^{3/2}\cdot \frac{2}{(1+\alpha_j)^2}  & \alpha_j<1, z_j=-\alpha_j
   \end{cases}.
\end{align}
\end{corollary}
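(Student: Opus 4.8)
The plan is to reduce the non-central case to the central case already proved in Theorem~\ref{thm:MAIN}, tracking exactly where the weighting $\bomega$ enters the computation. The key observation is that the symmetry hypothesis on $\mathcal{S}_\bomega$ is precisely what was used about $\mathcal{S}$ throughout the proof: the only structural facts invoked were the decomposition $S(\balpha\bx^{-1}) = (\alpha_k/x_k + x_k/\alpha_k)P_k(\bx) + Q_k(\bx)$ with $P_k, Q_k$ free of $x_k$ (Equation~\eqref{PQ}), used to solve the critical point equations and to verify the Hessian and vanishing-order conditions for Theorem~\ref{integral}. So the first step is to replace $S$ by the weighted inventory $S_\bomega(\bx) := \sum_{\bsigma \in \mathcal{S}} \omega(\bsigma)\,\bx^\bsigma$ everywhere, observe that the symmetry of $\mathcal{S}_\bomega$ gives the analogous decomposition $S_\bomega(\bx) = (x_k + 1/x_k)P_k(\bx) + Q_k(\bx)$ (after the $\balpha\bx^{-1}$ substitution, $S_\bomega(\balpha\bx^{-1}) = (\alpha_k/x_k + x_k/\alpha_k)P_k + Q_k$ with the new $P_k, Q_k$), and re-derive the diagonal expression of Equation~\eqref{eq:diag} with $S$ replaced by $S_\bomega$ — this is immediate since that derivation only needs $S$ to be a Laurent polynomial supported on $\mathcal{S}$.

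Next I would re-run the critical point computation of Section~\ref{CritPts}: the lemma solving Equation~\eqref{eq:critical} goes through verbatim with $S_\bomega$, since Equation~\eqref{eq:solutions} only uses that $S_\bomega(\balpha\bx^{-1})$ has the stated form and that $P_k \not\equiv 0$ (which holds because $\mathcal{S}$ is genuinely $d$-dimensional, so there is a step moving in dimension $k$, and its weight $\omega$ is positive). Hence the minimal critical point is still $(\balpha^-, t_{\balpha^-})$ with $t_{\balpha^-}$ now built from $S_\bomega(\balpha^+)$, and Proposition~\ref{ExpGrowth} gives exponential growth $S_\bomega(\balpha^+)$. Then the large-weight residue reduction of Section~\ref{SubExp} and the small-weight argument are unchanged: the pole at $x_d = 1$ is unaffected by $\bomega$, and the change of variables~\eqref{cov} produces the same $A(\btheta)$ as in~\eqref{subexp} — crucially, $A(\btheta)$ came entirely from the numerator $\prod_k \alpha_k^{-2}(\alpha_k^2 - x_k^2)$ and the factors $(1-x_k)$, which do not involve $\bomega$ at all — while $\phi$ picks up $S_\bomega$ in place of $S$. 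Lemma~\ref{first} then applies word for word: its proof uses only that $A_j(0) = 0$ for $j \le m$, that $\underline{\phi}$ vanishes to order three, and that $\phi$ has the symmetric form making odd-order partials vanish at $\mathbf{0}$ — all still true. So the first nonzero $C_j$ is $C_m$, with only the $\ell = 0$ term, giving subexponential growth $n^{-r/2 - m}$.

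Finally I would recompute the constant. Following Equation~\eqref{UsingThm}, the contributing-critical-point constant is the product over $j$ of local factors, and the only change relative to~\eqref{ConstantCase} is that the second derivative of $\phi$ at a critical point is now $-2P_j(\mathbf{1})/S_\bomega(\balpha^+)$ (respectively its analogue at points with $z_j = -\alpha_j$), since $\frac{\partial^2}{\partial\theta_k^2}\phi(\mathbf{0}) = -2P_k(\mathbf{0})/(2P_k(\mathbf{0}) + Q_k(\mathbf{0}))$ and the denominator evaluates to $S_\bomega$ at the critical point — this replaces the "number of positive steps" count $P_j$ by the weighted count $P_j(\mathbf{1})$ in the $\det(\mathcal{H})^{-1/2}$ factor and in the $\mathcal{D}^m A(\mathbf{0})$ factor. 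Substituting and collecting, the $\alpha_j > 1$, $\alpha_j = 1$, and $\alpha_j < 1$ cases yield exactly the stated $c(z_j)$ with every occurrence of $P_j$ replaced by $P_j(\mathbf{1})$ and $S(\balpha^+)$ by $S_\bomega(\balpha^+)$. The parity-dependence discussion for critical points with exponential growth $-S_\bomega(\balpha^+)$ transfers unchanged, and Proposition~\ref{contrib} identifying the contributing set generalizes with $S$ replaced by $S_\bomega$.

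The main obstacle — more a bookkeeping hazard than a genuine difficulty — is making sure that $P_k$ and $Q_k$ in the non-central statement are unambiguously the Laurent-polynomial coefficients of the \emph{weighted} inventory $S_\bomega$ after the $\balpha$-substitution, and that positivity of $\omega$ is genuinely used (so that $P_k(\mathbf{1}) > 0$ and the minimal point argument and the nonvanishing of $\mathcal{D}^m A$ at the critical point are not spoiled); one should also double-check that the symmetry of $\mathcal{S}_\bomega$ is exactly the hypothesis needed and not something slightly stronger, e.g. that it suffices for the decomposition~\eqref{PQ} and does not require $\mathcal{S}$ itself to be symmetric. I would state explicitly at the outset that the entire proof of Theorem~\ref{thm:MAIN} is "linear in $S$" in the sense that $S$ enters only through the kernel $1 - t x_1\cdots x_d\, S(\balpha\bx^{-1})$ and the derived quantities $S(\balpha^+)$, $P_k$, $Q_k$, and then simply note each place this substitution is legitimate, rather than reproducing the computations.
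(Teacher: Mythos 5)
Your proposal is correct and follows essentially the same approach as the paper: reduce to Theorem~\ref{thm:MAIN} by observing that its proof uses only the reflection symmetry of the inventory polynomial (via the decomposition~\eqref{PQ}) and that this symmetry survives the replacement of $S$ by $S_\bomega$, so every evaluation of $S$ can simply be replaced by $S_\bomega$. The paper states this reduction in three terse sentences, whereas you trace it through each stage of the argument (diagonal expression, critical points, residue reductions, Lemma~\ref{first}, constant computation) and flag where positivity of $\bomega$ and the $P_j\mapsto P_j(\bo)$ substitution enter; this is a more careful write-up of the same idea, not a different route.
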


\begin{proof}
The diagonal expression in Equation \eqref{eq:diag} holds for the weighted stepset $\mathcal{S}_\bomega$ as it is still symmetric over the axes. 
The proof of Theorem~\ref{thm:MAIN} only used the symmetry $S(\balpha \bx) = S\left( \frac{1}{ \balpha \bx}\right)$ which remains true when weighting the stepset by $\bomega$. The resulting analysis is the same, with evaluations of the unweighted step function $S$ being replaced by evaluations of the weighted step function $S_\bomega$. 
\end{proof}

\begin{example}[Non-Central Weights]
Consider the simple step set with symmetric weighting $\bomega$ as
\begin{align*}
S_\bomega(x,y)=3\left( x + \frac{1}{x} \right) +5\left( y + \frac{1}{y} \right).
\end{align*}
Then applying the weighting $\alpha =(\frac{1}{2},7)$ gives the weighting
\begin{align*}
S_\bomega(\frac{x}{2},7y)=\frac{3x}{2}+ \frac{6}{x}+ 35y+ \frac{5}{7y}.
\end{align*}
Note that there are two walks of length two returning to the origin which have different weight as $ \frac{3x}{2}\cdot\frac{6}{x}=9, 35y \cdot \frac{5}{7y}=25$. Thus, the weighting is not central. 
To calculate the corresponding asymptotics of $q_{\alpha, \omega}(n)$, we see 
\begin{align*}
S_\bomega(1, 7) &= 3(1+1)+5\left(7+ \frac{1}{7}\right)=\frac{292}{7},\\
P_1 &= 3.
\end{align*}
The corresponding asymptotics are 
\begin{align*}
q_{\alpha, \omega}(n) \sim \frac{18688}{7203} \cdot \sqrt{ \frac{1533}{\pi}}\cdot \left(\frac{292}{7}\right)^n n^{\frac{-3}{2}}.
\end{align*}
\end{example}

\section{General observations and future work}

This strategy also gives access to many related asymptotic factors. The following two results are easy consequences of our work. Under the same hypotheses as the main theorem, we can give similar formulas for the number of walks in the positive orthant which end on $k$ axes. In particular, the number of excursions in the positive orthant with steps from $\mathcal{S}$ of length $n$ grows as $S(\bo)^n n^{-3d/2}$. The constant factor can be computed using a similar analysis. We also note that setting the weights to 1 gives the same asymptotics as the highly symmetric weighted case given by Theorem~68 of Melczer~\cite{Melczer2017}.

A similar approach should work to determine general asymptotic formulas for weighted versions of the nearly symmetric walks recently investigated by Melczer and Wilson \cite{MeWi18+}. They consider symmetries which leave the weighted stepset invariant under the transformation $x\rightarrow \frac{1}{x}$ in all but one axis, whereas the symmetry we consider is the transformation $ \alpha x \rightarrow \frac{1}{\alpha x}$. There is some overlap between the two results. In particular, in the case where the stepset is completely symmetric and they consider a non-symmetric weighting, the formulas they have agree with ours. The approach is similar to the one given in this work, and a comparison shows how the different invariants of the weighted stepset change the analysis. Their work allows for more variability in the stepset, whereas our approach allows for more general weights. More generally, this approach will work for other Weyl groups. This is work in progress.

Following the model of Courtiel et al., we can adapt this to consider arbitrary starting points. As in that case, the dominant constant term is then parametrized by the starting point and turns out to be a discrete harmonic function. 

\paragraph{Acknowledgements}We are grateful to Kilian Raschel, Mireille Bousquet-M\'elou and Philippe Duchon that have provided useful commentary on this work. MM is partially supported by NSERC DG RGPIN-04157, a CNRS, and a PIMS Europe Fellowship during this collaboration. SS was partially supported by ERC grant COMBINEPIC during this collaboration.

\bibliographystyle{plain}

\bibliography{BIBLIO}

\begin{thebibliography}{10}

\bibitem{BandFlaj}
Cyril Banderier and Philippe Flajolet.
\newblock Basic analytic combinatorics of directed lattice paths.
\newblock {\em Theoretical Computer Science}, 281(1-2):37--80, 2002.

\bibitem{BoPeRaTr18+}
Beniamin Bogosel, Vincent Perrollaz, Kilian Raschel, and Am\'elie Trotignon.
\newblock 3{D} positive lattice walks and spherical triangles.
\newblock {\em arXiv preprint arXiv:1804.06245, to appear in Journal of
  Combinatorial Theory, Seires A.}, 2018.

\bibitem{BoChHoKaPe17}
Alin Bostan, Fr\'ed\'eric Chyzak, Mark van Hoeij, Manuel Kauers, and Lucien
  Pech.
\newblock Hypergeometric expressions for generating functions of walks with
  small steps in the quarter plane.
\newblock {\em European J. Combin.}, 61:242--275, 2017.

\bibitem{BoRaSa14}
Alin Bostan, Kilian Raschel, and Bruno Salvy.
\newblock Non-{D}-finite excursions in the quarter plane.
\newblock {\em J. Comb. Theory, Ser. A}, 121(0):45--63, 2014.

\bibitem{CoMeMiRa17}
Julien Courtiel, Stephen Melczer, Marni Mishna, and Kilian Raschel.
\newblock Weighted lattice walks and universality classes.
\newblock {\em Journal of Combinatorial Theory, Series A}, 152:255--302, 2017.

\bibitem{DeWa15}
Denis Denisov and Vitali Wachtel.
\newblock Random walks in cones.
\newblock {\em Ann. Probab.}, 43(3):992--1044, 2015.

\bibitem{Dura14}
Jetlir Duraj.
\newblock Random walks in cones: the case of nonzero drift.
\newblock {\em Stochastic Processes and their Applications}, 124(4):1503--1518,
  2014.

\bibitem{Feie18+}
Thomas Feierl.
\newblock Asymptotics for the number of zero drift reflectable walks in a
  {W}eyl chamber of type {A}.
\newblock {\em arXiv preprint arXiv:1806.05998}, 2018.

\bibitem{GeZe92}
Ira~M. Gessel and Doron Zeilberger.
\newblock Random walks in a {W}eyl chamber.
\newblock {\em Proc. Amer. Math. Soc.}, 115(1):27--31, 1992.

\bibitem{Grab02}
David~J. Grabiner.
\newblock Random walk in an alcove of an affine {W}eyl group, and non-colliding
  random walks on an interval.
\newblock {\em Journal of Combinatorial Theory, Series A}, 97(2):285--306,
  2002.

\bibitem{Horm90}
L.~H{{\"o}}rmander.
\newblock {\em The analysis of linear partial differential operators. {I}},
  volume 256 of {\em Grundlehren der Mathematischen Wissenschaften}.
\newblock Springer-Verlag, Berlin, second edition, 1990.
\newblock Distribution theory and Fourier analysis.

\bibitem{KaYa15}
M.~Kauers and R.~Yatchak.
\newblock Walks in the quarter plane with multiple steps.
\newblock In {\em Proceedings of FPSAC'15}, Discrete Math. Theor. Comput. Sci.
  Proc., pages 25--36, 2015.

\bibitem{Melczer2017}
Stephen Melczer.
\newblock {\em Analytic Combinatorics in Several Variables: Effective
  Asymptotics and Lattice Path Enumeration}.
\newblock PhD thesis, University of Waterloo and ENS Lyon, 2017.

\bibitem{MeMi16}
Stephen Melczer and Marni Mishna.
\newblock Asymptotic lattice path enumeration using diagonals.
\newblock {\em Algorithmica}, 75(4):782--811, 2016.

\bibitem{MeWi18+}
Stephen Melczer and Mark~C Wilson.
\newblock Higher dimensional lattice walks: Connecting combinatorial and
  analytic behavior.
\newblock {\em arXiv preprint arXiv:1810.06170, To appear in SIAM Journal on
  Discrete Mathematics}, 2018.

\bibitem{PeWi13}
Robin Pemantle and Mark~C. Wilson.
\newblock {\em Analytic combinatorics in several variables}, volume 140 of {\em
  Cambridge Studies in Advanced Mathematics}.
\newblock Cambridge University Press, Cambridge, 2013.

\bibitem{Zeil83}
Doron Zeilberger.
\newblock Andre's reflection proof generalized to the many-candidate ballot
  problem.
\newblock {\em Discrete Mathematics}, 44(3):325--326, 1983.

\end{thebibliography}

\end{document}